\newtheorem{theorem}{Theorem}[section]
\newtheorem{lemma}[theorem]{Lemma}
\newtheorem{proposition}[theorem]{Proposition}
\newtheorem{corollary}[theorem]{Corollary}
\newtheorem{conjecture}[theorem]{Conjecture}
\theoremstyle{definition}
\theoremstyle{remark}
\newtheorem{remark}[theorem]{Remark}
\renewcommand{\leq}{\leqslant}
\renewcommand{\le}{\leq}
\renewcommand{\geq}{\geqslant}
\newcommand\D{\operatorname{D}}
\def\F{\mathbf{F}}
\def\R{\mathbf{R}}
\def\C{\mathbf{C}}
\def\Z{\mathbf{Z}}
\def\P{\mathbf{P}}
\newcommand{\ent}[1]{\mathbf{H}[#1]}
\newcommand{\I}[1]{\mathbf{I}[#1]}
\newcommand{\bigI}[1]{\mathbf{I}\big[#1\big]}
\newcommand{\bigH}[1]{\mathbf{H}\big[#1\big]}
\def\eps{\varepsilon}
\newcommand{\dist}[2]{{\operatorname{d}[#1;#2]}} 
\newcommand{\costa}[1]{\eta\bigl(\dist{X_1^0}{#1} - \dist{X_1^0}{X_1}\bigr)} 
\newcommand{\costb}[1]{\eta\bigl(\dist{X_2^0}{#1} - \dist{X_2^0}{X_2}\bigr)} 
\newcommand{\bigdist}[2]{{\operatorname{d}\bigl[#1;#2\bigr]}} 
\numberwithin{equation}{section}
\begin{document}

\title[On a conjecture of Marton]{On a conjecture of Marton}



\author{W.~T.~Gowers}
\thanks{}
\address{Coll\`ege de France \\
11, place Marcelin-Berthelot \\
75231 Paris 05 \\
France \\
and Centre for Mathematical Sciences \\
Wilberforce Road \\
Cambridge CB3 0WB \\
UK}
\email{wtg10@dpmms.cam.ac.uk}

\author{Ben Green}
\thanks{BG is supported by Simons Investigator Award 376201.}
\address{Mathematical Institute \\
Andrew Wiles Building \\
Radcliffe Observatory Quarter \\
Woodstock Rd \\
Oxford OX2 6QW \\
UK}
\email{ben.green@maths.ox.ac.uk}

\author{Freddie Manners}
\thanks{FM is supported by a Sloan Fellowship.  During part of the preparation of this work, he is grateful for the hospitality of the Simons Institute for the Theory of Computing.}
\address{Department of Mathematics \\
University of California, San Diego (UCSD)\\
9500 Gilman Drive \# 0112 \\
La Jolla, CA  92093-0112 \\ 
USA}
\email{fmanners@ucsd.edu}

\author{Terence Tao}
\thanks{TT is supported by NSF grant DMS-1764034.  }
\address{Math Sciences Building \\
520 Portola Plaza \\
Box 951555 \\
Los Angeles, CA 90095 \\
USA}

\email{tao@math.ucla.edu}

\subjclass[2000]{Primary }

\begin{abstract}
We prove a conjecture of K.~Marton, widely known as the polynomial Freiman--Ruzsa conjecture, in characteristic $2$. The argument extends to odd characteristic, with details to follow in a subsequent paper.
\end{abstract}

\maketitle

\section{Introduction}
In this paper we prove a conjecture of Katalin Marton (see~\cite{ruzsa-fin-field}), widely known in the literature as the \emph{polynomial Freiman--Ruzsa conjecture} in characteristic $2$.
The conjecture has many equivalent forms, one of which is the following statement.

\begin{conjecture}\label{pfr-conj}
 Suppose that $A \subset \F_2^n$ is a set with $|A + A| \leq K|A|$. Then $A$ is covered by at most $2 K^{C}$ cosets\footnote{The factor of $2$ is needed to deal with the case where $K$ is close to $1$.  If $H_0$ is a subgroup of $\F_2^n$ and $A \subseteq H_0$ is a subset of size $(1-\eps)|H_0|$ for some small $\eps>0$, then in order to obtain the bound $|H|\leq|A|$, we need to pick a subgroup $H$ of $H_0$ of index 2 and cover $A$ by the two cosets of that subgroup.} of some subgroup $H \leq \F_2^n$ of size at most $|A|$.
\end{conjecture}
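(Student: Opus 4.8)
The plan is to replace Conjecture~\ref{pfr-conj} by an \emph{entropic} analogue and prove that. For $\F_2^n$-valued random variables $X,Y$, let $U_H$ denote the uniform distribution on a subgroup $H\le\F_2^n$, and recall the Ruzsa distance $\dist{X}{Y}=\ent{X'+Y'}-\tfrac12\ent{X}-\tfrac12\ent{Y}$, where $X',Y'$ are independent copies of $X,Y$; this quantity is non-negative and satisfies Ruzsa's triangle inequality $\dist{X}{Z}\le\dist{X}{Y}+\dist{Y}{Z}$. The entropic statement I would aim for is: \emph{if $X$ is $\F_2^n$-valued with $\dist{X}{X}\le\log K$, then there is a subgroup $H$ with $\dist{X}{U_H}\ll\log K$ and $\ent{X}=\log|H|\pm O(\log K)$}. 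Granting this, the combinatorial conjecture follows by a routine dictionary: apply it to $X=U_A$ (using that $|A+A|\le K|A|$ forces $\dist{U_A}{U_A}\le\log K$), translate $\dist{U_A}{U_H}\ll\log K$ into the statement that $|A+H|$ is small relative to $\max(|A|,|H|)$, and finish with Ruzsa's covering lemma together with the footnote's index-$2$ trick to get the clean bound on cosets.

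The core of the paper is the entropic statement, which I would prove by a variational argument. Fix two independent copies $X_1^0,X_2^0$ of $X$ and a small absolute constant $\eta>0$, and over all pairs of $\F_2^n$-valued random variables $(X_1,X_2)$ minimise
\[
\tau[X_1;X_2]=\dist{X_1}{X_2}+\eta\,\dist{X_1^0}{X_1}+\eta\,\dist{X_2^0}{X_2};
\]
a minimiser exists by compactness of the space of probability measures on $\F_2^n$. The crucial claim is that at a minimiser one has $\dist{X_1}{X_2}=0$. This is where I would take four independent copies $X_1,X_2,X_3,X_4$ (two of each ``type''), form the various pairwise sums $X_i+X_j$, and run a ``fibring'' computation: a conditional-entropy identity expresses sums such as $\ent{X_1+X_2}+\ent{X_3+X_4}$ in terms of the entropies of conditioned variables, and combining this with submodularity of entropy and an entropic Balog--Szemer\'edi--Gowers inequality produces a new pair $(X_1',X_2')$ — obtained by conditioning on suitable linear combinations — with strictly smaller $\tau$-value unless $\dist{X_1}{X_2}=0$. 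Once $\dist{X_1}{X_2}=0$, a rigidity lemma forces $X_1,X_2$ to be uniform on cosets of a single finite subgroup $H$; then the penalty terms give $\dist{X}{U_H}\le\eta^{-1}\bigl(\dist{X_1^0}{X_1}+\dist{X_2^0}{X_2}\bigr)=O(\eta^{-1}\dist{X}{X})$, and fixing $\eta$ appropriately yields the claimed bound.

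The main obstacle is precisely the inequality forcing a minimiser to have zero Ruzsa distance. Making this work requires (i) the right conditional fibring identity, stated so that the entropy of each relevant sum of copies is related back to Ruzsa distances between conditioned variables; (ii) an entropic Balog--Szemer\'edi--Gowers / Pl\"unnecke--Ruzsa toolkit, so that approximate additive structure detected on sums can be transferred to a genuine pair $(X_1',X_2')$; and (iii) careful bookkeeping of the $\eta$-weighted penalties, so that the entropic gain from one of the new pairs strictly exceeds the loss in the penalty terms. Balancing these — quantifying exactly how much $\tau$ can be decreased and checking that it is a definite positive amount whenever $\dist{X_1}{X_2}>0$ — is the delicate heart of the argument; by comparison the passage between the combinatorial and entropic formulations, the covering step, and the analysis of zero-distance pairs are comparatively standard.
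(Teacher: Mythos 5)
Your proposal is essentially the paper's own strategy, down to the details: the reformulation as an entropic statement, the variational minimisation of $\tau[X_1;X_2]=\dist{X_1}{X_2}+\eta\,\dist{X_1^0}{X_1}+\eta\,\dist{X_2^0}{X_2}$ over distributions by compactness, the fibring identity applied to a four-tuple of independent copies, entropic Balog--Szemer\'edi--Gowers to produce the improved pair, the zero-distance rigidity lemma identifying a subgroup, and the Ruzsa covering lemma (with the index-$2$ trick) to return to the combinatorial statement. The one ingredient your sketch leaves implicit is the specific use of characteristic $2$ in the endgame (namely that the three conditioned sums $U=X_1+X_2$, $V=\tilde X_1+X_2$, $W=X_1+\tilde X_1$ satisfy $U+V+W=0$), but you correctly flag the minimiser-decrement step as the delicate heart, and that identity is exactly where the paper resolves it.
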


\noindent Note that if $A$ is covered by at most $r$ cosets of a subgroup $H$ of size at most $|A|$, then $|A+A|\leq (\binom r2+1)|A|$, so \Cref{pfr-conj} allows one to pass from the property of having a small doubling constant to the property of being contained in a small union of cosets of a subgroup of size at most $|A|$ and back again with at most a polynomial loss in the parameters.

The first result in this direction was due to Imre Ruzsa~\cite{ruzsa-fin-field}, who obtained an upper bound of $2K^2 2^{K^4}$ in place of the desired $2K^C$. It was Ruzsa~\cite{ruzsa-fin-field},~\cite[Conjecture 2.2.2]{ruzsa-sumsets-survey} who attributed \Cref{pfr-conj} to Marton. Sanders~\cite{sanders} made a dramatic breakthrough by proving the first bounds of shape $\ll \exp(\log^{C_1} K)$, showing that $C_1 = 4 + \eps$ is permissible here. A variant of Sanders' argument due to Konyagin (not published by Konyagin, but described by Sanders in~\cite{sanders2}, and see also the comments on page 8 of~\cite{gmt}) showed that $C_1 = 3 + \eps$ is permissible. We also remark that \Cref{pfr-conj} (with worse values of $C$) was previously established in the special case when $A$ was a downset in~\cite{gt-freiman}. We refer to~\cite{green-fin-fields, green-pfr-note, lovett-survey} for surveys on this conjecture.

Our main result is the following.

\begin{theorem}\label{pfr}
\Cref{pfr-conj} is true with $C = 12$.
\end{theorem}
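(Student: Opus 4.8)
The plan is to deduce \Cref{pfr-conj} from an \emph{entropic} strengthening and to prove the latter by a variational argument. For $\F_2^n$-valued random variables $X,Y$, write $\dist{X}{Y} = \ent{X'-Y'} - \tfrac12\ent{X} - \tfrac12\ent{Y}$ for the entropic Ruzsa distance, where $X',Y'$ are independent copies of $X,Y$, and write $U_H$ for a uniform element of a subgroup $H\leq\F_2^n$. The entropic target is: there is an absolute constant $C_1$ such that for every $\F_2^n$-valued $X$ there is a subgroup $H$ with $\dist{X}{U_H}\leq C_1\dist{X}{X}$ (and, more usefully, a two-variable refinement: for all $X_1,X_2$ there is $H$ with $\dist{X_1}{U_H}+\dist{X_2}{U_H}\leq C_1\dist{X_1}{X_2}$); the argument below pins down $C_1=11$.

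\textbf{Step 1: reduction to the entropic statement.} If $A\subset\F_2^n$ has $|A+A|\leq K|A|$ and $X$ is uniform on $A$, then $\dist{X}{X}\leq\log K$ because $X+X'$ is supported on $A+A$. Applying the entropic statement to $X$ produces a subgroup $H$ with $\dist{U_A}{U_H}=O(\log K)$; the standard entropic ``Ruzsa calculus'' --- the Ruzsa triangle inequality $\dist{X}{Z}\leq\dist{X}{Y}+\dist{Y}{Z}$, the bounds $\ent{X+Y}\leq\ent{X}+\ent{Y}$ and $\ent{X+Y}\geq\max(\ent{X},\ent{Y})$ for independent $X,Y$, and the entropic Pl\"unnecke--Ruzsa estimates these generate --- then forces $\log|H|$ to lie within $O(\log K)$ of $\log|A|$ and forces $|A+H|\leq K^{O(1)}|H|$, so the Ruzsa covering lemma covers $A$ by $K^{O(1)}$ cosets of $H$; if $|H|>|A|$ one descends to a subgroup of $H$ of bounded index, at the cost of a further bounded factor and the factor of $2$ that \Cref{pfr-conj} permits. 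Tracking constants turns $C_1=11$ into $C=12$.

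\textbf{Step 2: the variational set-up and the easy case.} Fix $X$, set $k:=\dist{X}{X}$, and for a small absolute constant $\eta$ (it will turn out that $\eta=1/9$ is admissible, giving $C_1=1/\eta+2=11$) consider the functional $\tau(X_1,X_2):=\dist{X_1}{X_2}+\eta\,\dist{X}{X_1}+\eta\,\dist{X}{X_2}$ over all pairs of $\F_2^n$-valued random variables. After a preliminary reduction to a compact space of distributions one may take a minimizing pair $(X_1,X_2)$, and since $\tau(X,X)=(1+2\eta)k$ the minimum is at most $(1+2\eta)k$. If $\dist{X_1}{X_2}=0$ at the minimum, then the rigidity fact that vanishing Ruzsa distance forces $X_1$ and $X_2$ to be uniform on cosets of a single common subgroup $H$ --- whence $\dist{X_i}{U_H}=0$ --- combined with $\eta\,\dist{X}{X_1}+\eta\,\dist{X}{X_2}\leq(1+2\eta)k$ and the triangle inequality yields $\dist{X}{U_H}\leq(1/\eta+2)k=11k$, as required. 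Everything therefore reduces to showing that a $\tau$-minimizer cannot have $\dist{X_1}{X_2}>0$.

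\textbf{Step 3: the distance decrement, and the main obstacle.} Suppose $\dist{X_1}{X_2}>0$. Take two independent copies $(X_1,X_1')$ and $(X_2,X_2')$ of the minimizing pair, so that $\dist{(X_1,X_1')}{(X_2,X_2')}=2\dist{X_1}{X_2}$, and apply the \emph{fibring identity} to the addition homomorphism $(\F_2^n)^2\to\F_2^n$: this writes $2\dist{X_1}{X_2}$ as the sum of $\dist{X_1+X_1'}{X_2+X_2'}$, the conditional Ruzsa distance $\dist{X_1\mid X_1+X_1'}{X_2\mid X_2+X_2'}$, and a conditional mutual information built from the linear combinations of $X_1,X_1',X_2,X_2'$. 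The crucial role of characteristic $2$ is that $-x=x$, so the combinations that would otherwise proliferate (here and in iterates of the identity) collapse to a short list, and each surviving conditional mutual information is bounded by a small constant times $\dist{X_1}{X_2}$ using submodularity of entropy and the triangle inequality. From the resulting pieces one builds a conditioned pair $(\wt X_1,\wt X_2)$ --- obtained by conditioning $X_1$ and $X_2$ on appropriate linear functions of the four copies --- whose Ruzsa distance is genuinely smaller than $\dist{X_1}{X_2}$ (up to the controlled errors), while its cost exceeds that of $(X_1,X_2)$ by at most a bounded multiple of $\eta\,\dist{X_1}{X_2}$ (via the triangle inequality, the entropic sumset estimates, and the fact that conditioning raises Ruzsa distance only by a controlled amount). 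With $\eta$ below the resulting threshold, $\tau(\wt X_1,\wt X_2)<\tau(X_1,X_2)$ unless $\dist{X_1}{X_2}=0$, contradicting minimality; hence $\dist{X_1}{X_2}=0$ and the entropic statement follows. The main obstacle is exactly the bookkeeping of this step: the conditioning variables must be chosen so that \emph{every} conditional mutual information produced by the fibring identity is simultaneously bounded with total coefficient small enough to leave a strict decrement in the distance term, \emph{and} so that the cost terms grow by no more than a constant multiple of that decrement --- it is this tight balance (which fails in odd characteristic, where further terms survive, hence the deferral of that case) that both makes the argument go through and fixes the numerology behind $C_1=11$ and $C=12$.
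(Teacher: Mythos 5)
Your Steps 1 and 2 correctly reproduce the paper's strategy: reduce to an entropic formulation, minimize the functional $\tau$ over pairs of distributions (compactness gives a minimizer), and use the rigidity fact that vanishing Ruzsa distance forces uniformity on a coset. Step 3, however, contains a crucial misconception and omits the actual endgame mechanism. You assert that each conditional mutual information arising from the fibring identity "is bounded by a small constant times $\dist{X_1}{X_2}$ using submodularity of entropy and the triangle inequality." This is not the case and cannot be: in general these conditional mutual informations can be as large as $\ent{X_1}$, and no soft entropy inequality controls them. In the actual argument they are bounded \emph{only because} $(X_1,X_2)$ is a $\tau$-minimizer — one tests the minimality hypothesis against four separate candidate pairs (two built from sums $X_i+\tilde X_j$, two from the corresponding fibres), and the failure of each to decrease $\tau$ is precisely what forces the mutual informations $I_1$, $I_2$, $I_3$ to be $O(\eta k)$. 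Without this, there is no decrement at all.

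You also misattribute the role of characteristic $2$. It is not a matter of linear combinations "collapsing to a short list"; the critical point is the exact algebraic identity $U+V+W=0$ where $U=X_1+X_2$, $V=\tilde X_1+X_2$, $W=X_1+\tilde X_1$. After conditioning on $S=X_1+X_2+\tilde X_1+\tilde X_2$, one obtains a trio $(T_1,T_2,T_3)$ that sums to zero and is nearly pairwise independent (by the mutual information bounds). The entropic Balog–Szemerédi–Gowers lemma then extracts from such a trio a pair $(X_1',X_2')$ with $\dist{X_1'}{X_2'}$ controlled by $\sum_{i<j}\I{T_i:T_j}$, and the identity $\dist{T_1}{T_2}+\dist{T_1}{T_3}+\dist{T_2}{T_3}=0$ for exactly pairwise independent zero-sum trios is what makes this work. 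None of these ingredients — the entropic BSG step, the zero-sum trio, or the careful accounting of the two distinct fibring applications needed to bound both $I_1$ and $I_2$ — appear in your sketch, and "build a conditioned pair by conditioning on appropriate linear functions" does not substitute for them. As written, Step 3 is not a proof, and the way you source the bounds would actively mislead someone attempting to fill in the details.
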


An elaboration of our arguments gives corresponding results in vector spaces over $\F_p$, $p$ odd. This is notationally somewhat heavier and makes the underlying ideas slightly harder to appreciate so we give this argument in a separate paper~\cite{ggmt-pfr-odd}.

Since the initial release of this preprint, Jyun-Jie Liao has given a refinement of the argument that improves the constant $C=12$ in \Cref{pfr} to $C=11$ (with corresponding numerical improvements to our other main results and applications); see~\cite{liao1,liao2}.

\subsubsection*{Applications}
It was shown in~\cite[Theorem 1.11]{gmt} that \Cref{pfr-entropy} (or \Cref{pfr}) in characteristic $2$ implies the so-called `weak' polynomial Freiman--Ruzsa conjecture over $\Z$, so that is now a theorem as well.

\begin{theorem}\label{th13}  Let $A$ be a finite subset of $\Z^D$ for some $D$, and suppose that $|A+A| \leq K|A|$. Then there is some set $A' \subseteq A$, $|A'| \geq K^{-C_1/2} |A|$, with $\dim A' \leq C_2 \log K$, for some absolute\footnote{If one traces through the arguments in~\cite{gmt} (dropping all terms involving the exponent $1-1/C_{\mathrm{PFR}}$, and using the constant $C=11$ from \Cref{pfr-entropy} below), one finds that one can take $C_1 = 20C = 220$ and $C_2 = \frac{40}{\log 2}$. The reason we have stated \Cref{th13} with a $C_1/2$ in the exponent is so that the constants $C_1, C_2$ here are the same as the ones in~\cite{gmt}.} constants $C_1,C_2>0$.
\end{theorem}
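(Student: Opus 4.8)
The plan is to deduce \Cref{th13} as a black-box consequence of the characteristic-$2$ result. Indeed, \cite[Theorem~1.11]{gmt} shows that the polynomial Freiman--Ruzsa conjecture over $\F_2$---for which we now have the quantitative form \Cref{pfr}, or its entropic refinement \Cref{pfr-entropy}---implies the weak polynomial Freiman--Ruzsa conjecture over $\Z$, with only a polynomial loss in the constants. So the first, and essentially only, step is to feed \Cref{pfr} (or \Cref{pfr-entropy}) into the machinery of \cite{gmt}; what remains is the purely bookkeeping task of reading off the explicit exponents $C_1$ and $C_2$.

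It is worth recalling the shape of that reduction, since this is where the constants come from. Given $A\subseteq\Z^D$ with $|A+A|\le K|A|$, one works with the random variable uniform on $A$ (or directly with $A$), using Pl\"unnecke--Ruzsa to pass freely between $A+A$, $A-A$ and their iterated sumsets at the cost of bounded powers of $K$. One then passes to the reduction modulo~$2$ and, iterating over the filtration $\Z^D\supseteq 2\Z^D\supseteq 4\Z^D\supseteq\cdots$ (equivalently, expanding in binary digits), brings the problem into a form to which the $\F_2$ statement applies scale by scale: at each scale it supplies a subgroup of bounded codimension on which the relevant digit concentrates, and reassembling the digits yields a sublattice $H\le\Z^D$ with $\operatorname{rank} H=O(\log K)$ together with a coset $g+H$ carrying a $K^{-O(1)}$ proportion of $A$. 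Taking $A'=A\cap(g+H)$ gives a subset of the required relative size with $\dim A'\le\operatorname{rank} H=O(\log K)$; passing to a subset is genuinely necessary here, as the example of a long arithmetic progression together with a few far-apart points in general position already shows. The exponent $C_1$ is a fixed multiple of the PFR exponent $C$---each use of Pl\"unnecke--Ruzsa and each application of the $\F_2$ statement costs a bounded power of $K$---and tracing the argument of \cite{gmt} with $C=11$ from \Cref{pfr-entropy} gives $C_1=20C=220$, and correspondingly $C_2=\tfrac{40}{\log 2}$; the $C_1/2$ in the exponent of \Cref{th13} is chosen precisely so that these match the constants already recorded in \cite{gmt}.

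Since the reduction is carried out in full in \cite{gmt}, there is no genuine obstacle on our side beyond this bookkeeping. Were one to redo the reduction from scratch, the crux would be that $x\mapsto x\bmod 2$ is very far from injective on $A$---a structured set such as a long progression collapses almost completely---so one cannot simply apply the $\F_2$ result to the image $A\bmod 2$; it is exactly the filtered, digit-by-digit argument that circumvents this while keeping every loss polynomial in $K$. The only genuinely new input on our part is \Cref{pfr}, equivalently \Cref{pfr-entropy}.
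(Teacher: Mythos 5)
Your proposal matches the paper exactly: the paper's entire ``proof'' of \Cref{th13} is the single sentence before it, citing \cite[Theorem~1.11]{gmt} as showing that \Cref{pfr-entropy} (equivalently \Cref{pfr}) implies the weak PFR over $\Z$, with the explicit constants $C_1=220$, $C_2=40/\log 2$ relegated to the footnote and obtained by tracing the \cite{gmt} argument with $C=11$. Your additional paragraph sketching the internal mechanism of the \cite{gmt} reduction (filtration mod $2$, Pl\"unnecke--Ruzsa, reassembly into a low-rank sublattice) is not part of what the paper records, but it is consistent with the citation-based approach the paper actually takes.
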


\Cref{th13} solves a conjecture stated in~\cite{chang, gmt, manners,pz}. We remark that there is also a `strong' PFR conjecture over $\Z$ (though its formulation requires care; see~\cite{lovett-regev} and~\cite{manners-pfr-formulation} for more discussion). This remains a challenging open problem.

As mentioned above, the polynomial Freiman--Ruzsa conjecture is equivalent to many other statements in additive combinatorics. We mention a few of these here.

\begin{corollary}\label{additive-stability}
There is a constant $C_3$ such that if $f: \F_2^m \to \F_2^n$ is a function such that the set $\{ f(x)+f(y)-f(x+y): x,y \in \F_2^m\}$ has cardinality at most $K$, then $f$ may be written as $g+h$, where $g$ is linear and the range of $h$ is a set of cardinality $O(K^{C_3})$. \end{corollary}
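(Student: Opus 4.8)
The plan is to run the standard graph trick and feed it into \Cref{pfr}. Working in characteristic $2$ throughout, I would set $A := \{(x,f(x)) : x \in \F_2^m\} \subseteq \F_2^m \times \F_2^n$, so $|A| = 2^m$, and let $D := \{f(x)+f(y)-f(x+y) : x,y \in \F_2^m\}$ be the given set with $|D| \le K$. Since $(x,f(x)) + (y,f(y)) = (x+y,\, f(x+y) + d)$ with $d = f(x)+f(y)-f(x+y) \in D$, one gets $A+A \subseteq \{(z,f(z)+d) : z \in \F_2^m,\ d \in D\}$ and hence $|A+A| \le |D|\,|A| \le K|A|$. Applying \Cref{pfr} produces a subgroup $H \le \F_2^m \times \F_2^n$ with $|H| \le |A| = 2^m$ and a covering $A \subseteq \bigcup_{i=1}^{N}(a_i + H)$ with $N \le 2K^{12}$; write $a_i = (u_i,w_i)$.

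Next I would argue that $H$ is essentially a graph. Let $\pi \colon \F_2^m\times\F_2^n \to \F_2^m$ be projection to the first coordinate. Because $\pi(A) = \F_2^m$, the cosets $\pi(a_i) + \pi(H)$ cover $\F_2^m$, so $[\F_2^m : \pi(H)] \le N$ and therefore $|\pi(H)| \ge 2^m/N$. Put $V := \pi(H) \le \F_2^m$ and $H_0 := \{y \in \F_2^n : (0,y) \in H\}$. From the exact sequence $0 \to H_0 \to H \to V \to 0$ (the second map being $\pi$) we get $|H_0| = |H|/|V| \le 2^m/(2^m/N) = N \le 2K^{12}$, i.e. the fibres of $\pi|_H$ are small.

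To extract the linear part, note that over the field $\F_2$ the surjection $\pi\colon H \to V$ splits, so there is a linear map $v \mapsto (v,\ell(v))$ from $V$ into $H$ with $\ell\colon V \to \F_2^n$ linear; extend $\ell$ to a linear map $g\colon\F_2^m\to\F_2^n$ (choose a complement of $V$ in $\F_2^m$ and let $g$ be $0$ there). Given $x$, choose $i$ with $(x,f(x)) \in a_i + H$; then $x+u_i \in V$ and $(0,\, f(x)+w_i+\ell(x+u_i)) \in H$, so $f(x)+w_i+\ell(x+u_i) \in H_0$. Using $\ell(x+u_i) = g(x+u_i) = g(x)+g(u_i)$ (linearity of $g$ and $g|_V = \ell$), this rearranges to $f(x)-g(x) \in w_i + g(u_i) + H_0$. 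Hence $h := f - g$ takes all its values in $\bigcup_{i=1}^{N}(w_i + g(u_i) + H_0)$, a set of size at most $N|H_0| \le (2K^{12})^2 = 4K^{24}$. This gives $f = g+h$ with $g$ linear and $|\range(h)| = O(K^{24})$, proving the corollary with $C_3 = 24$ (no attempt at optimising the exponent).

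The only real content beyond invoking \Cref{pfr} is this last passage, from a covering by cosets of a subgroup to a genuine decomposition $f = g + h$ with $g$ \emph{linear}: this is where one must use both that $\pi(A)$ is all of $\F_2^m$ (to bound the index of $V$) and the size bound $|H| \le |A|$ (to then bound the fibre group $H_0$), together with the fact that short exact sequences of $\F_2$-vector spaces split. I do not anticipate any serious obstacle; everything else is bookkeeping peculiar to characteristic $2$.
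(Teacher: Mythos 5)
Your argument is correct. The paper does not actually present a proof of this corollary—it attributes the equivalence to Ruzsa and defers to Green's unpublished note~\cite{green-pfr-note}—but your route (pass to the graph $A=\{(x,f(x))\}\subseteq\F_2^m\times\F_2^n$, observe $|A+A|\le K|A|$, invoke \Cref{pfr}, project the covering subgroup $H$ to the first factor to bound both the index of $V=\pi(H)$ and the fibre group $H_0$, then split the short exact sequence of $\F_2$-vector spaces to extract the linear map $g$) is precisely the standard one being referenced, and each step checks out, yielding a valid (unoptimised) exponent $C_3=24$.
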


\noindent The equivalence of this and \Cref{pfr} is due to Ruzsa~\cite[Conjecture 2.2.3]{ruzsa-sumsets-survey}. Ruzsa did not publish the proof, but he communicated it to the second author and the details may be found in~\cite{green-pfr-note}. Here, as usual, $O(X)$ denotes a quantity bounded by $CX$ for some absolute constant~$C$. 

\begin{corollary}\label{partial-additivity}
There is a constant $C_4$ with the following property. Suppose that $f : \F_2^m \to \F_2^n$ is a function such that, if $x, y$ are selected uniformly at random from $\F_2^m$, $\P(f(x + y) = f(x) + f(y)) \geq 1/K$. Then there is a homomorphism $g \colon \F_2^m \to \F_2^n$ such that $\P(f(x) = g(x)) \gg K^{-C_4}$.
\end{corollary}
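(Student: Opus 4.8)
The plan is to prove \Cref{partial-additivity} by feeding the Balog--Szemer\'edi--Gowers theorem into \Cref{pfr} (equivalently \Cref{pfr-conj}), which is the usual mechanism behind the known equivalences of the polynomial Freiman--Ruzsa conjecture with statements of this ``$1\%$'' type. Write $N = 2^m$ and encode $f$ by its graph $\Gamma = \{(x,f(x)) : x \in \F_2^m\} \subseteq \F_2^m \times \F_2^n$, so $|\Gamma| = N$. The hypothesis $\P_{x,y}(f(x+y) = f(x)+f(y)) \geq 1/K$ says exactly that the set $\Omega := \{(x,y) : f(x)+f(y)+f(x+y)=0\}$ has size at least $N^2/K$; and since $(x,f(x)) + (y,f(y)) = (x+y,f(x+y))$ precisely when $(x,y) \in \Omega$, this means $\Gamma$ is rich in additive triples: $|\{(s,t,u) \in \Gamma^3 : s+t+u=0\}| \geq N^2/K$. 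First I would invoke the Balog--Szemer\'edi--Gowers theorem (in its additive-triple, or Schur-triple, formulation) to pass to a subset $\Gamma' \subseteq \Gamma$ with $|\Gamma'| \gg K^{-O(1)}N$ and $|\Gamma'+\Gamma'| \ll K^{O(1)}|\Gamma'|$ (over $\F_2$ this is also a bound on $|\Gamma'-\Gamma'|$).

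Next I would apply \Cref{pfr-conj} to $\Gamma'$, covering it by $M \ll K^{O(1)}$ cosets of a subgroup $H \leq \F_2^m \times \F_2^n$ with $|H| \leq |\Gamma'| \leq N$. Let $\pi \colon \F_2^m \times \F_2^n \to \F_2^m$ be the first projection; as $\Gamma'$ lies in the graph of a function, $\pi$ is injective on $\Gamma'$. Pigeonholing to a single coset $v+H$ meeting $\Gamma'$ in $\gg K^{-O(1)}N$ points, and noting this count is at most $|\pi(v+H)| = |\pi(H)|$, I get that $V_0 := \pi(H) \leq \F_2^m$ has index $\ll K^{O(1)}$, whence the ``vertical'' subgroup $K_0 := H \cap (\{0\} \times \F_2^n)$ satisfies $|K_0| = |H|/|V_0| \leq N/|V_0| \ll K^{O(1)}$. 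Choosing an $\F_2$-complement $H_1$ to $K_0$ in $H$ realises $H_1$ as the graph of a homomorphism $\phi \colon V_0 \to \F_2^n$, and $v+H$ is a union of $|K_0|$ cosets of $H_1$; pigeonholing once more and extending $\phi$ to a homomorphism $g \colon \F_2^m \to \F_2^n$, I conclude that there is a constant $c \in \F_2^n$ such that $U := \{x : f(x) = g(x)+c\}$ has $|U| \gg K^{-O(1)}N$ (and $U$ sits inside a single coset of $V_0$).

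It remains to remove the constant $c$, which is the one genuinely delicate point. For this I would set $h := f - g \colon \F_2^m \to \F_2^n$ and use two facts: $h$ is identically $c$ on $U$, and $h$ satisfies \emph{the same} functional equation as $f$, since $h(x)+h(y)+h(x+y) = f(x)+f(y)+f(x+y)$ because $g$ is a homomorphism; in particular $\P_{x,y}(h(x+y)=h(x)+h(y)) \geq 1/K$. Then for every $(x,y) \in \Omega$ with $x,y \in U$ we get $h(x+y) = h(x)+h(y) = c+c = 0$, so $\{x+y : (x,y) \in \Omega \cap (U\times U)\} \subseteq h^{-1}(0) = \{x : f(x)=g(x)\}$; as each value of $x+y$ is hit at most $N$ times, $|h^{-1}(0)| \geq |\Omega \cap (U\times U)|/N$. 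Thus it suffices to show $|\Omega \cap (U\times U)| \gg K^{-O(1)}N^2$, after which $g$ is the desired homomorphism. I expect this last step --- guaranteeing that the rich set $\Omega$ meets $U \times U$ in polynomial density, exploiting that $U$ is dense in a coset of the bounded-index subgroup $V_0$ and that $\Omega$ is invariant under the $S_3$-action permuting $x$, $y$, $x+y$ (and the translations this entails) --- to be the main obstacle; by contrast the Balog--Szemer\'edi--Gowers input is completely standard. An alternative, and perhaps cleaner, finish would be to choose $\Gamma'$ more carefully at the outset (symmetrically, or so as to be rich in additive triples all of whose vertices lie in $\Gamma'$) so that the coset $v+H$ may be taken through the origin and $c$ vanishes automatically.
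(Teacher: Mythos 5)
The paper proves nothing here: it simply cites Samorodnitsky's Theorem 4.1, so there is no in-paper argument to compare against, and what follows is an assessment of your proposal on its own terms.

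Your overall route --- graph $\Gamma$ of $f$, Balog--Szemer\'edi--Gowers to extract $\Gamma' \subseteq \Gamma$ with polynomial doubling, \Cref{pfr-conj} to trap $\Gamma'$ in $K^{O(1)}$ cosets of a subgroup $H \leq \F_2^m\times\F_2^n$ with $|H|\le|\Gamma'|$, then reading off that $H$ is, up to a small kernel $K_0$, the graph of a linear map --- is the standard route, and these steps are sound. (One minor point: the usual BSG hypothesis is an additive-energy bound, while you have $\geq N^2/K$ Schur triples in $\Gamma$; Cauchy--Schwarz on the common third coordinate converts this to $\geq N^3/K^2$ additive quadruples, so BSG applies with parameter $K^2$ and only costs you a constant in the exponent.)

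The one genuine gap is exactly where you suspect it, and it is a real gap: the claim $|\Omega\cap(U\times U)|\gg K^{-O(1)}N^2$ does \emph{not} follow from anything established. BSG hands you a subset $\Gamma'$ with small doubling but makes no promise that $\Gamma'$ captures a positive proportion of the Schur triples of $\Gamma$; it is perfectly consistent that $\Omega$ is essentially disjoint from $\pi(\Gamma')\times\pi(\Gamma')$ and hence from $U\times U$. The $S_3$-invariance of $\Omega$ does not repair this, since it constrains neither $\Omega$ nor $U$ to meet. Fortunately the constant $c$ can be removed far more cheaply, without revisiting the functional equation at all. Suppose $U=\{x: f(x)=g(x)+c\}$ has $|U|\gg K^{-O(1)}N$ with $c\neq 0$. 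Averaging over linear functionals $\ell:\F_2^m\to\F_2$, each nonzero $x$ has $\ell(x)=1$ for exactly half of all $\ell$, so some $\ell$ has $|\{x\in U:\ell(x)=1\}|\geq(|U|-1)/2$. Set $g'(x):=g(x)+\ell(x)\,c$; this is again a homomorphism $\F_2^m\to\F_2^n$, and $f(x)=g'(x)$ whenever $x\in U$ and $\ell(x)=1$. Hence $\P(f(x)=g'(x))\gg K^{-O(1)}$, which is the desired conclusion. With this substitution for your final step, the argument is complete.
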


\noindent See~\cite[Theorem 4.1]{samorodnitsky}. The notation $X\gg Y$ means that there is an absolute constant $c>0$ such that $X\geq cY$.

The next result is a polynomially effective inverse theorem for the Gowers uniformity norm ${U^3(\F_2^n)}$; see~\cite{gt-equiv, lovett, samorodnitsky}.

\begin{corollary}\label{u3-inverse}
There is a constant $C_5$ with the following property. Let $f \colon \F_2^n \to \C$ be a $1$-bounded function such that $\|f\|_{U^3(\F_2^n)} \geq 1/K$. Then there exists a quadratic polynomial $P \colon \F_2^n \to \F_2$ such that \[ |\frac{1}{2^n} \sum_{x \in \F_2^n} f(x) (-1)^{P(x)}| \gg K^{-C_5}.\]
\end{corollary}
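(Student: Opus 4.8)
The plan is to run the standard ``derivative'' reduction of the $U^3$ inverse problem to the polynomial Freiman--Ruzsa conjecture, now available as \Cref{pfr}, following Green--Tao and Samorodnitsky; see \cite{gt-equiv, lovett, samorodnitsky}. Write $e_\xi(x) = (-1)^{\langle \xi, x\rangle}$ and $\Delta_h f(x) = f(x)\overline{f(x+h)}$, so that $\widehat{\Delta_h f}(\xi) = \E_x f(x)\overline{f(x+h)}e_\xi(x)$. The first step unwinds the $U^3$ norm: from the identity $\|f\|_{U^3(\F_2^n)}^8 = \E_h \|\Delta_h f\|_{U^2(\F_2^n)}^4$ and the bound $\|g\|_{U^2}^4 = \sum_\xi |\hat g(\xi)|^4 \le \|\hat g\|_\infty^2$, valid for any $1$-bounded $g$, one gets $\E_h \|\widehat{\Delta_h f}\|_\infty^2 \ge K^{-8}$. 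Since every summand lies in $[0,1]$, there is a set $B \subseteq \F_2^n$ of density $\gg K^{-8}$ such that for each $h \in B$ one can choose $\phi(h) \in \F_2^n$ with $|\widehat{\Delta_h f}(\phi(h))| \ge \rho$, where $\rho \gg K^{-4}$.

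The second step extracts additive structure. Using the pointwise identity $\Delta_{h_1}f(x)\,\overline{\Delta_{h_2}f(x)} = \Delta_{h_1+h_2}f(x+h_2)$ together with a Gowers--Cauchy--Schwarz argument, one shows that $\phi(h_1)+\phi(h_2)+\phi(h_3)+\phi(h_4)=0$ for $\gg K^{-O(1)}$ of the additive quadruples $(h_1,h_2,h_3,h_4)\in B^4$; exploiting in addition the commutation $\Delta_h\Delta_k f = \Delta_k\Delta_h f$, a variant of the same computation shows that the bilinear form implicitly carried by $\phi$ is approximately symmetric, in that $\langle\phi(h),k\rangle = \langle\phi(k),h\rangle$ for $\gg K^{-O(1)}$ of all pairs $(h,k)$.

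The third step brings in \Cref{pfr}. The quadruple count says precisely that the graph $\Gamma = \{(h,\phi(h)):h\in B\}\subseteq\F_2^n\times\F_2^n$ has additive energy $\gg|\Gamma|^3/K^{O(1)}$; by the Balog--Szemer\'edi--Gowers theorem one passes to a subset on which $\phi$ behaves like a Freiman homomorphism, and after extending $\phi$ arbitrarily to all of $\F_2^n$ (losing only polynomially) the hypothesis of \Cref{partial-additivity} is met. That corollary then gives a linear map $x\mapsto Mx$ with which $\phi$ agrees on a set $B''$ of density $\gg K^{-O(1)}$, and the approximate symmetry from the second step allows $M$ to be chosen symmetric --- here one must take some care with the diagonal of $M$, i.e.\ with the distinction between classical and non-classical quadratic forms over $\F_2$. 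Finally one integrates: taking a classical quadratic $P:\F_2^n\to\F_2$ whose associated bilinear form is (the alternating part of) $M$ and setting $F = f\cdot(-1)^P$, one finds that for $h\in B''$ the function $x\mapsto\Delta_h F(x)$ has its Fourier mass concentrated, and (after a further short manipulation of the same flavour that relocates the relevant frequency to the origin) obtains $\E_h|\E_x\Delta_h F(x)|^2 = \|F\|_{U^2}^4 \gg K^{-O(1)}$; hence $\|\widehat F\|_\infty\gg K^{-O(1)}$, and absorbing the corresponding linear character into $P$ yields a quadratic $P$ with $|\E_x f(x)(-1)^{P(x)}|\gg K^{-O(1)}$, as required.

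The main obstacle is this third step: converting the ``many preserved additive quadruples'' output of the Gowers--Cauchy--Schwarz analysis into a configuration governed by \Cref{pfr} (equivalently \Cref{partial-additivity}) needs Balog--Szemer\'edi--Gowers and some bookkeeping, and --- more delicately in characteristic $2$ --- one must ensure the linear map can be taken symmetric and the quadratic form one integrates it to can be taken classical. These are exactly the points handled in the reductions of \cite{gt-equiv, lovett, samorodnitsky}, which we follow; the derivative identities of the first two steps and the final integration are routine.
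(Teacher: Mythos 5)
Your outline is the standard Green--Tao/Samorodnitsky derivative reduction from the $U^3$-inverse problem to PFR via \Cref{partial-additivity}, which is precisely the route the paper takes by citing \cite{gt-equiv, lovett, samorodnitsky}; the paper itself does not write out this deduction and explicitly declines to track the exponent $C_5$. Your sketch correctly identifies the delicate points in characteristic $2$ (symmetrizing the linear map and staying with classical quadratic forms), which are handled in those references.
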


\noindent In principle, explicit values for the exponents $C_3,C_4, C_5$ (and for the unspecified implied constants in the big-O and $\gg$ notations) could be worked out by tracing through the arguments in the references provided, but we will not do so here.

For our arguments a particularly relevant equivalent formulation of the PFR conjecture is the entropic
version of the conjecture from~\cite{gmt}, which we present with explicit constants (\Cref{pfr-entropy} below).

Turning to applications of a more miscellaneous nature, a particularly attractive application of \Cref{th13} is the following result of Mudgal, which is a special case of~\cite[Theorem 6.2]{mudgal}.

\begin{corollary}[Mudgal]
Let $A \subset \R$ be a finite set with at least two elements. Then either then $m$-fold sumset $mA$ or the $m$-fold product set $A^{(m)}$ has cardinality at least $|A|^{f(m)}$, where $f(m) \rightarrow \infty$ as $m \rightarrow \infty$.
\end{corollary}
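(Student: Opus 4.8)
The plan is to deduce this from \Cref{th13} by following Mudgal's argument: what is proved in~\cite[Theorem~6.2]{mudgal} is the stated conclusion \emph{conditional} on the weak polynomial Freiman--Ruzsa conjecture over $\Z$ (for some absolute constant), and this conjecture is precisely \Cref{th13}, now unconditional. So I only need to recall the structure of the deduction and isolate the step that carries the weight. First one reduces matters: it suffices to show that for each positive integer $K$ there is $M(K)<\infty$ such that $m\ge M(K)$ forces $\max(|mA|,|A^{(m)}|)\ge|A|^{K}$ for every finite $A\subset\R$ with $|A|\ge 2$; then $f(m):=\max(\{0\}\cup\{K:M(K)\le m\})$ is nondecreasing, tends to infinity, and does the job. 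Splitting $A$ by sign and discarding $0$ (which at worst halves $|A|$) we may assume $A\subset\R_{>0}$, and $|A|$ small is disposed of by hand.

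The elementary but crucial first observation is that smallness propagates downward: for any $a_0\in A$ and $1\le j\le m$ one has $jA+(m-j)a_0\subseteq mA$ and $A^{(j)}\cdot a_0^{m-j}\subseteq A^{(m)}$, so if $\max(|mA|,|A^{(m)}|)<|A|^{K}$ then in fact $|jA|<|A|^{K}$ and $|A^{(j)}|<|A|^{K}$ for \emph{all} $j\le m$; in particular $A$, and likewise $L:=\{\log a:a\in A\}$ (using $|L+L|=|A\cdot A|$), have additive doubling less than $|A|^{K-1}$.

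Next one feeds this into \Cref{th13}. The additive groups generated by $A$ and by $L$ are finitely generated and torsion free, hence free; fixing embeddings into lattices $\Z^{D}$ and applying \Cref{th13} (together with the standard Ruzsa-type iteration/covering, and pigeonholing between the additive and multiplicative sides) one extracts structural information to the effect that, on a not-too-small portion of $A$, both the additive and the multiplicative structure are of bounded complexity --- roughly, one is reduced to the case where the relevant part of $A$ lies inside a coset of a bounded-rank lattice in $\R$ on the additive side and inside a bounded-rank multiplicative progression on the multiplicative side --- while the inequalities $|jA|,|A^{(j)}|<|A|^{K}$ (for $j\le m$) persist. It is here that \Cref{th13}, as opposed to the merely polynomial Freiman dimension that classical Freiman theory would supply, is what makes the bounds below close.

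The crux, and where the bulk of the work in~\cite{mudgal} sits, is then a quantitative sum--product statement: a large set lying in a coset $x_{0}+\Z v_{1}+\cdots+\Z v_{d}$ of a bounded-rank lattice in $\R$ cannot have \emph{both} its $m$-fold sumset and its $m$-fold product set of polynomial size once $m$ is large. The underlying reason is that an $m$-fold product of elements is a value of a product of $m$ affine forms in $x_{0},v_{1},\dots,v_{d}$; when these reals are algebraically independent, unique factorisation in $\Q[x_{0},v_{1},\dots,v_{d}]$ shows that distinct multisets give distinct products, forcing $|A^{(m)}|\ge\binom{|A|+m-1}{m}$, which already blows up as $m$ grows. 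The real difficulty --- the main obstacle, handled in~\cite[Theorem~6.2]{mudgal} via archimedean-size and height considerations --- is the case of algebraic dependence, equivalently quantifying the additive spread that any genuinely nontrivial multiplicative structure must create. Once this is in place one reads off a bound on $m$ in terms of $K$, producing $M(K)$ and completing the proof.
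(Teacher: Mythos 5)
Your proposal is correct and matches the paper's approach: the corollary appears in the paper as a direct citation to~\cite[Theorem 6.2]{mudgal}, which establishes the result conditionally on the weak polynomial Freiman--Ruzsa conjecture over $\Z$, now unconditional by \Cref{th13}. The paper offers no further details of Mudgal's argument, so your sketch of the internals of~\cite{mudgal} goes beyond what is printed there, but the top-level deduction you give is precisely the paper's.
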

This generalizes a well-known result of Bourgain and Chang~\cite{bourgain-chang}, who established this when $A$ is a set of integers. More general results of a similar flavour may be found in~\cite{mudgal}.

The PFR conjecture has several applications to topics in theoretical computer science such as linearity testing (see \Cref{partial-additivity}), construction of extractors, communication complexity, locally decodable codes, and non-malleable codes; see~\cite{lovett-survey} for a survey.

\subsubsection*{Entropy methods}
Ruzsa~\cite{ruzsa-entropy} and the fourth author~\cite{tao-entropy} studied entropy analogues of the notion of sumset. In a recent preprint~\cite{gmt}, the second, third and fourth authors further developed the use of entropy techniques in additive combinatorics. Entropy methods, as well as several of the results in both~\cite{gmt} and~\cite{tao-entropy}, will be used extensively here.

For a full discussion of the relevant concepts, we refer the reader to~\cite{gmt} or to \Cref{entropy-app}. For now, we recall the notion of entropic Ruzsa distance between random variables. If $X, Y$ are two finitely supported random variables taking values in an additive group $G$, then we define
\begin{equation}
\label{ruz-dist-def} \dist{X}{Y} \coloneqq \ent{X' - Y'} - \tfrac{1}{2} \ent{X'} - \tfrac{1}{2} \ent{Y'},
\end{equation}
where $X', Y'$ are independent copies of $X, Y$.
In particular we stress that $\dist{X}{Y}$ depends only on the \emph{individual distributions} of $X$ and $Y$: it does not require them to be independent, or even that they are defined on a common sample space.

We refer the reader to~\cite[Section 1]{gmt} and \Cref{entropy-app} for further discussion. For the moment we just mention the \emph{Ruzsa triangle inequality} $\dist{X}{Y} \leq \dist{X}{Z} + \dist{Z}{Y}$, as well as the fact that, despite $\dist{-}{-}$ being called a distance, we have $\dist{X}{X} = 0$ only when $X$ is a coset of a subgroup (see \Cref{lem:100pc} below). In this paper (since we will use no other notion of distance) we write $\dist{X}{Y}$ instead of the more cumbersome $d_{\mathrm{ent}}(X,Y)$ from~\cite{gmt}.
We also switch to using square brackets, as many of our random variables $X, Y$ are themselves expressions involving parentheses.

One consequence of the investigation in~\cite{gmt} was another equivalent formulation of the PFR conjecture. This is the version we will focus on in the present paper.

\begin{theorem}\label{pfr-entropy}
  Let $G = \F_2^n$, and suppose that $X^0_1, X^0_2$ are $G$-valued random variables.
  Then there is some subgroup $H \leq G$ such that
  \[
    \dist{X^0_1}{U_H} + \dist{X^0_2}{U_H} \le 11\dist{X^0_1}{X^0_2},
  \]
  where $U_H$ denotes the uniform distribution on $H$.
  Furthermore, both $\dist{X^0_1}{U_H}$ and $\dist{X^0_2}{U_H}$ are at most $6 \dist{X^0_1}{X^0_2}$.
\end{theorem}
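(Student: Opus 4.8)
The plan is to prove \Cref{pfr-entropy} by a now-standard \emph{entropy decrement} or \emph{compactness-and-perturbation} argument: we fix $X_1^0, X_2^0$ and consider the functional
\[
  \tau(X_1, X_2) \coloneqq \dist{X_1}{X_2} + \eta\bigl(\dist{X_1^0}{X_1} + \dist{X_2^0}{X_2}\bigr)
\]
over all pairs of $G$-valued random variables $(X_1, X_2)$, for a suitably small absolute constant $\eta > 0$ (something like $\eta = 1/9$ should be in the right ballpark, chosen at the end to make the constants work out). Since $G = \F_2^n$ is finite, the space of probability distributions on $G$ is compact and $\tau$ is continuous, so a minimizer $(X_1, X_2)$ exists. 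The trivial competitor $(X_1^0, X_2^0)$ shows $\tau(X_1, X_2) \le \dist{X_1^0}{X_2^0}$, which will eventually feed into the quantitative bound. If we can show that at any minimizer one has $\dist{X_1}{X_2} = 0$, then by \Cref{lem:100pc} $X_1$ (and $X_2$) is uniform on a coset of some subgroup $H$; translating by the coset representative does not change Ruzsa distances, so we may take $X_1 = X_2 = U_H$, and then the minimality inequality $\tau(U_H, U_H) \le \tau(X_1^0, X_2^0)$ together with the Ruzsa triangle inequality unwinds to give $\dist{X_1^0}{U_H} + \dist{X_2^0}{U_H} \le (\text{const})\,\dist{X_1^0}{X_2^0}$, with the constant depending on $\eta$; the ``furthermore'' clause about each distance being at most $6\dist{X_1^0}{X_2^0}$ should fall out of the same computation by bounding each term separately.

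The heart of the matter — and the step I expect to be by far the hardest — is showing that a minimizer of $\tau$ must satisfy $\dist{X_1}{X_2} = 0$. The strategy is by contradiction: assuming $\dist{X_1}{X_2} > 0$, one builds a perturbed pair $(X_1', X_2')$ with $\tau(X_1', X_2') < \tau(X_1, X_2)$, contradicting minimality. The construction of the perturbation is the technical core of the whole paper: one takes two independent copies $X_1, X_1'$ of $X_1$ and two independent copies $X_2, X_2'$ of $X_2$ and forms the new variables from linear combinations such as $X_1 + X_2$, $X_1 + X_1'$, $X_1 + X_2 + X_1' + X_2'$ and conditionings thereof, then plays the various entropic inequalities (submodularity/the entropic Kaimanovich--Vershik--type inequalities, Ruzsa distance estimates, the entropic Balog--Szemerédi--Gowers and Plünnecke--Ruzsa analogues from \cite{tao-entropy} and \cite{gmt}) against each other. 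The desired contradiction will take the form: either $\dist{X_1}{X_2}$ can be decreased by an amount proportional to itself at a controlled cost in the $\eta$-terms, or the ``fibres'' of the relevant sums are already so structured that $\dist{X_1}{X_2}$ was $0$ to begin with. One typically needs a key inequality of the rough shape
\[
  \dist{X_1 + X_2}{X_1' + X_2'} \le \dist{X_1}{X_2} \cdot (\text{something} < 1) + (\text{small multiple of } \eta\text{-terms}),
\]
or an even more carefully weighted variant involving conditioned variables, and extracting exactly the right such inequality with the right constants is the delicate part.

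Organizationally, I would first record in a preliminary section all the entropy lemmas I intend to use (the Ruzsa triangle inequality, \Cref{lem:100pc}, submodularity of entropy, the entropic Ruzsa covering and Plünnecke--Ruzsa inequalities, and Tao's entropic BSG theorem), stating them with explicit constants. Then I would define $\tau$, establish existence of a minimizer by compactness, and isolate the ``perturbation lemma'' as a standalone statement: \emph{if $(X_1, X_2)$ minimizes $\tau$ then $\dist{X_1}{X_2} = 0$}. The bulk of the paper is then the proof of that lemma, which I would break into (a) setting up the fourfold independent copies and the various derived random variables; (b) proving a ``fibring''/conditional-distance identity that relates $\dist{X_1}{X_2}$ to distances between conditioned sums; (c) a case analysis or convexity argument to locate a single good conditioning where the cost inequality closes; and (d) assembling these into the strict decrease of $\tau$, hence the contradiction. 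Finally, in a short closing section I would deduce the displayed inequality and the $6\dist{X_1^0}{X_2^0}$ bound from the minimality inequality by direct manipulation, and separately note (using the first paragraph of the introduction, which records $|A+A| \le K|A| \Rightarrow$ entropic input and back) that \Cref{pfr-entropy} with constant $11$ yields \Cref{pfr} with $C = 12$.
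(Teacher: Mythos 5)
Your approach is the same as the paper's: define the penalty-weighted functional $\tau[X_1;X_2]=\dist{X_1}{X_2}+\eta\dist{X_1^0}{X_1}+\eta\dist{X_2^0}{X_2}$ with $\eta=\tfrac19$, take a minimizer by compactness, prove a perturbation lemma showing that any minimizer has $\dist{X_1}{X_2}=0$, invoke the $100\%$ case (\Cref{lem:100pc}) to produce the subgroup $H$, and then unwind the minimality inequality to get the stated bound. You also correctly identify that the perturbation lemma (the paper's \Cref{de-prop}) is the technical heart and that it will go via sums and conditionings of independent copies, fibring, and an entropic Balog--Szemer\'edi--Gowers step.

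However, your final computation contains a concrete error. You assert that the competitor $(X_1^0,X_2^0)$ gives $\tau(X_1,X_2)\le\dist{X_1^0}{X_2^0}$. That is false: $\tau[X_1^0;X_2^0]=\dist{X_1^0}{X_2^0}+\eta\dist{X_1^0}{X_1^0}+\eta\dist{X_2^0}{X_2^0}$, and the self-distances $\dist{X_i^0}{X_i^0}$ do not vanish in general (they are zero only when $X_i^0$ is uniform on a coset). If you bound those self-distances by $2\dist{X_1^0}{X_2^0}$ via the triangle inequality you obtain the constant $(1+4\eta)/\eta=13$, which is worse than the claimed $11$. The paper instead plugs in the \emph{swapped} competitor $(X_2^0,X_1^0)$: then both penalty terms become exactly $\dist{X_1^0}{X_2^0}$, so $\tau[X_2^0;X_1^0]=(1+2\eta)\dist{X_1^0}{X_2^0}$, and dividing by $\eta=\tfrac19$ gives the constant $11$. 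Also, the ``furthermore'' bound $\dist{X_i^0}{U_H}\le 6\dist{X_1^0}{X_2^0}$ does not ``fall out by bounding each term separately''; the paper obtains it by combining the sum bound with the triangle inequality $\lvert\dist{X_1^0}{U_H}-\dist{X_2^0}{U_H}\rvert\le\dist{X_1^0}{X_2^0}$. These are small but real slips; correcting them recovers the paper's argument exactly.
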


The fact that \Cref{pfr-entropy} is equivalent, up to constants, to the characteristic $2$ case of Marton's original conjecture, as well as to the other combinatorial formulations of the polynomial Freiman--Ruzsa conjecture, was established in~\cite[Section 8]{gmt}.  For the convenience of the reader, we give the proof that \Cref{pfr-entropy} implies \Cref{pfr} in \Cref{equiv-app}.

\subsubsection*{Variants and further remarks}
The polynomial Freiman--Ruzsa conjecture has inspired various analogous investigations in other contexts; for a recent example in the symmetric group, see~\cite{keevash-lifshitz}.

One could consider strengthening \Cref{additive-stability} by requiring the function $h$ to take values in a $C_7$-fold sumset of the set $\{ f(x)+f(y)-f(x+y): x,y \in \F_2^m\}$, for some absolute constant $C_7$.
However, this ``strong PFR conjecture'' is known to be false: see~\cite{aaronson} and~\cite[Section 1.17]{tao-poincare}.

The \emph{polynomial Bogolyubov conjecture} (see e.g.,~\cite[Conjecture 1.5]{lovett-survey}) asserts that if $A$ is a non-empty subset of $\F_2^n$ of doubling at most $K$, then $2A-2A=4A$ contains a subspace of cardinality $\gg K^{-O(1)} |A|$.  In~\cite{sanders} it was shown that $2A-2A$ contains a subspace of size at least $\gg K^{-O(\log^3 K)} |A|$; see also a recent bilinear extension of this result in~\cite{hosseini-lovett}.  We were unable to use our methods to improve upon these results.  However, we can obtain a weak Bogulyubov type result, in which the number of iterates in the sumset is allowed to grow logarithmically in $K$; see \Cref{bog-remark}.

Let $f(2,K)$ denote the best constant such that every non-empty subset $A$ of $\F_2^n$ of doubling at most $K$ is contained in an affine subspace of cardinality at most $f(2,K) |A|$.  It is known that $f(2,K)$ is a piecewise linear function that can be explicitly computed for small $K$, and has the asymptotic bounds
\[ \frac{2^{2K}}{4K} (1-o(1)) \leq f(2,K) \leq \frac{2^{2K}}{2K}(1+o(1))\]
as $K \to \infty$; see~\cite{even-zohar} (which builds upon prior work in~\cite{diao, gt-freiman, konyagin}).  The results here do not appear to significantly strengthen these bounds.

A formalization of \Cref{pfr} in the proof assistant language {\tt Lean4} may be found at \url{https://teorth.github.io/pfr/}.

\subsubsection*{Notation}
All logarithms in this paper will be natural logarithms. We will very extensively use the notation of entropy $\ent{-}$ and mutual information $\I{-}$ as well as conditional variants of these concepts. Basic definitions and facts are recalled in \Cref{entropy-app}.

\subsubsection*{Acknowledgments}

It is a pleasure to thank Floris van Doorn, Zach Hunter, Emmanuel Kowalski, and Arend Mellendijk for corrections and comments.  We are also grateful to Jyun-Jie Liao for sharing his refinement of the constant in \Cref{pfr}.  The Lean formalization project was set up by Ya\"el Dilles and the fourth author; we thank the many participants of that project (see \url{https://github.com/teorth/pfr/graphs/contributors} for a listing) for their invaluable contributions, which allowed the formalization to be completed in just three weeks (and, in particular, before this paper was submitted for publication).

\section{Induction on entropy distance}\label{sec2}

Fix, for the rest of the paper, the group $G = \F_2^n$ and the variables $X^0_1, X^0_2$ appearing in \Cref{pfr-entropy}. We shall think of $X^0_1, X^0_2$ as `reference variables', and not modify them for the rest of the proof; while they do appear in many of the expressions we will need to control, they will play a relatively minor role in the estimates.
 Our overall strategy can be thought of as a kind of induction on (a slight modification of) the distance $\dist{X_1}{X_2}$, though we will ultimately phrase it as a compactness argument.

For the rest of the paper, $\eta$ will denote the constant $\frac{1}{9}$. For any two $G$-valued random variables $X_1, X_2$, we introduce the functional
\begin{equation}\label{tau-def}
  \tau[X_1; X_2] \coloneqq \dist{X_1}{X_2} + \eta  \dist{X^0_1}{X_1} + \eta \dist{X^0_2}{X_2}.
\end{equation}
Note that this functional depends only on the distributions $p_{X_1}, p_{X_2}$ of $X_1,X_2$, and not on whether $X_1,X_2$ (or $X_1^0$, $X^0_2$) are independent or dependent (or even on whether they are defined on the same sample space).

Here is the main result from which we will deduce \Cref{pfr-entropy}.

\begin{proposition}\label{de-prop}
  Let $X_1, X_2$ be two $G$-valued random variables with $\dist{X_1}{X_2} > 0$. Then there are $G$-valued random variables $X'_1, X'_2$ such that
\begin{equation}
\label{d-opt} \tau[X'_1;X'_2] < \tau[X_1;X_2].
\end{equation}
\end{proposition}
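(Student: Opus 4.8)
The plan is to produce the improved pair $X_1', X_2'$ as \emph{fibre variables} of a suitable $1$-dimensional quotient, obtained by a ``$45^\circ$ rotation'' of four independent copies of $X_1, X_2$. Concretely, I would take independent random variables $X_1, X_2, \tilde X_1, \tilde X_2$ with $X_1, \tilde X_1$ copies of the given $X_1$ and $X_2, \tilde X_2$ copies of $X_2$, and form the linear combinations
\[
Y_1 = X_1 + X_2, \quad Y_2 = \tilde X_1 + \tilde X_2, \quad Y_3 = X_1 + \tilde X_1, \quad Y_4 = X_2 + \tilde X_2,
\]
noting that in characteristic $2$ these satisfy $Y_1 + Y_2 + Y_3 + Y_4 = 0$. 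One then conditions on the value of the pair $(Y_1 + Y_2, Y_3)$ (or some similar linear functional), and defines $X_1', X_2'$ to be the conditioned versions of $X_1$ and $X_2$ — or rather, averaged fibre variables whose \emph{distributions} are the relevant conditional distributions. The point of working with four copies rather than two is that it symmetrizes the construction and decouples the two ``directions'' of conditioning, which is what ultimately buys the strict inequality.

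The key steps, in order, would be: (1) set up the eight-variable (or four-variable) configuration and record the basic linear relations among the $Y_i$ in $\F_2^n$; (2) compute $\dist{X_1'}{X_2'}$ in terms of conditional entropies of the $Y_i$, using the Ruzsa-distance identities and the submodularity/conditioning properties of entropy from \Cref{entropy-app}, to show $\dist{X_1'}{X_2'} \le \dist{X_1}{X_2} - (\text{correction terms})$; (3) separately bound the ``cost'' terms $\dist{X_1^0}{X_1'}$ and $\dist{X_2^0}{X_2'}$ against $\dist{X_1^0}{X_1} + (\text{stuff})$ and $\dist{X_2^0}{X_2} + (\text{stuff})$ via the Ruzsa triangle inequality and conditional-distance estimates; (4) assemble these into an inequality of the form $\tau[X_1';X_2'] \le \tau[X_1;X_2] - c\cdot(\text{some nonnegative mutual-information quantity})$, with the precise value $\eta = 1/9$ chosen exactly so that the coefficients work out; and (5) argue that the subtracted quantity is strictly positive whenever $\dist{X_1}{X_2} > 0$, using \Cref{lem:100pc} (that zero Ruzsa self-distance forces a coset structure) — if all the relevant mutual informations vanished, $X_1, X_2$ would already be uniform on cosets of a common subgroup, forcing $\dist{X_1}{X_2} = 0$, a contradiction.

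I expect the main obstacle to be step (4): extracting the \emph{strict} inequality with an honest, explicit slack. It is easy to get $\tau[X_1';X_2'] \le \tau[X_1;X_2]$; the difficulty is bookkeeping all the entropy error terms arising from the several applications of submodularity so that they combine into a single manifestly nonnegative expression with a positive coefficient, and checking that this expression cannot vanish in the regime $\dist{X_1}{X_2} > 0$. This is where the specific choice $\eta = 1/9$ is forced: too large an $\eta$ and the cost terms overwhelm the gain in $\dist{X_1'}{X_2'}$; too small and a different inequality degrades. So I anticipate that a substantial fraction of the proof is a careful, essentially linear-algebraic, optimization over the entropy inequalities — likely organized as a chain of lemmas (a ``fibring'' lemma for Ruzsa distance, a ``projection/conditioning decreases distance on average'' lemma, and a final combination lemma) rather than one monolithic computation — followed by a short endgame invoking \Cref{lem:100pc}.
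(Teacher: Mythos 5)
Your proposal captures the right general flavour --- four independent copies, linear combinations that exploit the characteristic-$2$ identity, a fibring/conditioning move, and a final numerical balance that forces $\eta=\tfrac19$ --- but the actual argument is structurally quite different, and I think several of your steps would not go through as described.

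The most important difference is that the paper does \emph{not} use a single conditioning construction. It proceeds (in the contrapositive) by testing several \emph{different} candidate pairs $(X_1',X_2')$ against the failure hypothesis~\eqref{to-contradict}: the two sum choices~\eqref{sum-1},~\eqref{sum-2}, the two fibre choices~\eqref{fiber-1},~\eqref{fiber-2}, and then (in the endgame) yet another family of variables of the form~\eqref{from-bsg} produced by the entropic Balog--Szemer\'edi--Gowers lemma (\Cref{lem-bsg}). The motivating examples in \Cref{sec3} show why this case analysis is essential: for a random dense subset of a subgroup the sums win, for a union of cosets the fibres win, and for a ``random subset of a union of cosets'' neither works and one must use the endgame construction. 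A single conditioning on $(Y_1+Y_2, Y_3)$ --- or on any fixed linear functional --- would have to cover all of these regimes simultaneously, and I do not see how it would; in particular your $X_1'$ is a conditioned version of $X_1$, whereas the endgame variables in~\eqref{from-bsg} are conditioned versions of $X_1+X_2$, which is a genuinely different object. You also do not mention \Cref{lem-bsg} at all, and it is the central tool of the endgame: it converts the near-independence of the three variables $U=X_1+X_2$, $V=\tilde X_1+X_2$, $W=X_1+\tilde X_1$ (conditioned on $S$) into an actual small-distance pair, and the rearrangement trick $2\ent{T_3}-\ent{T_1}-\ent{T_2}+3\I{T_1:T_2} = \sum_{i<j}\I{T_i:T_j}$ is what makes the identity $U+V+W=0$ bite.

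Your step (5) is also a misconception. The paper never argues ``if the slack vanishes then $X_1, X_2$ are uniform on cosets of a common subgroup'' and never invokes \Cref{lem:100pc} inside the proof of \Cref{de-prop}. Instead, it derives an unconditional quantitative inequality $k \le (8\eta+\eta^2)k$ at the end of \Cref{endgame}, which forces $k=0$ outright because $8\eta+\eta^2 < 1$ when $\eta=\tfrac19$. \Cref{lem:100pc} is only used \emph{after} \Cref{de-prop}, in the compactness deduction of \Cref{pfr-entropy}, to convert $\dist{X_1}{X_2}=0$ into a subgroup. Moreover, the qualitative statement you want (vanishing of certain mutual informations $\Rightarrow$ coset structure) is not something you can just assert; it would require its own nontrivial argument, and even then a qualitative rigidity statement would not yield the explicit $C=12$ without a further quantitative stability analysis, which is exactly what the paper's chain of explicit estimates~\eqref{first-est},~\eqref{second-est},~\eqref{total-dist},~\eqref{almost-done} provides.
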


If we ignore the two penalty terms involving $\eta$ on the right-hand side of~\eqref{tau-def}, then \Cref{de-prop} asserts that if two random variables have a positive Ruzsa distance, then we can `improve' them, finding two new variables $X'_1, X'_2$ that are closer to each other than the original variables $X_1,X_2$.  To take care of the penalty terms, we will also need the new variables $X'_1,X'_2$ to be `related' to the old variables $X_1,X_2$ in some sense, so that both sets of variables have a comparable Ruzsa distance to the reference variables $X^0_1, X^0_2$.

In order to deduce \Cref{pfr-entropy} from \Cref{de-prop} we will use the `100 percent' case of our main theorem, which we note now.
\begin{lemma}%
  \label{lem:100pc}
  Suppose that $X_1,X_2$ are $G$-valued random variables such that
  $\dist{X_1}{X_2}=0$. Then there exists a subgroup $H \leq G$ such that $\dist{X_1}{U_H} = \dist{X_2}{U_H} = 0$.
\end{lemma}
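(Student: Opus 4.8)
The plan is to unpack the definition of Ruzsa distance and use the entropy submodularity/Balog–Szemerédi–Gowers-type toolkit from \cite{gmt} to force a rigid additive structure. Let $X_1', X_2'$ be independent copies of $X_1, X_2$. The hypothesis $\dist{X_1}{X_2} = 0$ says $\ent{X_1' - X_2'} = \tfrac12\ent{X_1'} + \tfrac12\ent{X_2'}$. Applying the Ruzsa triangle inequality with $X_1$ in the middle gives $0 \le \dist{X_1}{X_1} \le \dist{X_1}{X_2} + \dist{X_2}{X_1} = 0$ (using symmetry $\dist{X_1}{X_2} = \dist{X_2}{X_1}$, valid in characteristic $2$ since $-Y' = Y'$), so $\dist{X_1}{X_1} = 0$ and likewise $\dist{X_2}{X_2} = 0$. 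Thus it suffices to treat the case of a single variable $X$ with $\dist{X}{X} = 0$: I would show such an $X$ is uniform on a coset of a subgroup $H$, and then, knowing both $X_1$ and $X_2$ are uniform on cosets $x_1 + H_1$ and $x_2 + H_2$, use $\dist{X_1}{X_2} = 0$ a second time to force $H_1 = H_2 =: H$; then $\dist{X_i}{U_H} = 0$ follows because Ruzsa distance is translation-invariant (shifting a variable by a constant does not change any of the entropies in \eqref{ruz-dist-def}).

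For the core claim — $\dist{X}{X} = 0 \implies X$ uniform on a coset — I would argue as follows. With $X', X''$ independent copies of $X$, the condition reads $\ent{X' - X''} = \ent{X}$. Since $X' - X''$ determines neither $X'$ nor $X''$ but conditioning can only drop entropy, and since $\ent{X' - X''} \ge \ent{X' - X'' \mid X''} = \ent{X'} = \ent{X}$ always, equality means $X' - X''$ is independent of $X''$. By symmetry $X' - X''$ is also independent of $X'$. Write $Z = X' - X''$; then $Z$ is independent of $X'$ and $Z + X' = X''$ has the same distribution as $X'$. So adding an independent copy of $Z$ to $X'$ preserves the distribution of $X'$; iterating, $X'$ is invariant under translation by the subgroup $H$ generated by the support of $Z$ (this is where one uses that the group is finite, or at least that $X$ is finitely supported, so the relevant subgroup is finite and the convolution argument terminates). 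An invariant-under-$H$ distribution is constant on $H$-cosets, and since $\ent{Z} = \ent{X' - X''}$ while $Z$ ranges over $\Supp(X) - \Supp(X) \subseteq$ a single coset difference, a short computation pins $X$ down to being exactly uniform on one coset of $H$.

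The main obstacle is executing the equality-case analysis cleanly: the inequality $\ent{X' - X''} \ge \ent{X}$ is elementary, but extracting from equality the \emph{full} independence statement "$X' - X''$ independent of $X''$" (rather than just some weaker conditional-entropy identity) requires care, and then converting "$X$ invariant under translation by $\Span(\Supp(Z))$" into "$X$ uniform on a single coset of that span" needs one to check the support of $Z$ genuinely generates the stabilizer and no smaller distribution is possible. I expect this is most smoothly handled by invoking the entropic characterisation already developed in \cite{gmt} (the Ruzsa-distance analogue of the fact that $|A + A| = |A|$ forces $A$ to be a coset), so in the write-up I would cite the relevant lemma from \cite{gmt} for the single-variable statement and spend the bulk of the proof on the reduction above — the triangle-inequality step isolating $\dist{X_i}{X_i} = 0$, and the final step matching the two subgroups $H_1 = H_2$ using $\dist{X_1}{X_2} = 0$ once more together with translation-invariance.
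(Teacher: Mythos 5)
Your argument is correct and follows essentially the same route as the paper's: reduce to $\dist{X_1}{X_1}=0$ via the Ruzsa triangle inequality, then appeal to a single-variable structure theorem --- the paper cites \cite[Theorem~1.11(i)]{tao-entropy} rather than a lemma from \cite{gmt}, after noting that in characteristic~$2$ the hypothesis $\dist{X_1}{-X_1}=0$ required by that theorem coincides with $\dist{X_1}{X_1}=0$. One streamlining worth noting: once $\dist{X_1}{U_H}=0$ is in hand, a second application of the triangle inequality gives $\dist{X_2}{U_H}\le\dist{X_2}{X_1}+\dist{X_1}{U_H}=0$ directly, so your final steps (showing $X_2$ is uniform on a coset of some $H_2$ and then arguing $H_1=H_2$) are unnecessary detours. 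A small correction: the symmetry $\dist{X}{Y}=\dist{Y}{X}$ holds over any abelian group, since $\ent{X'-Y'}=\ent{Y'-X'}$ by bijection-invariance of entropy; what genuinely uses characteristic~$2$ is the passage from $\dist{X_1}{X_1}=0$ to $\dist{X_1}{-X_1}=0$.
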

\begin{proof}%
  By the triangle inequality, $\dist{X_1}{X_1} = 0$, and so (since $G=\F_2^n$) we have $\dist{X_1}{-X_1} = 0$. By~\cite[Theorem~1.11(i)]{tao-entropy}, it follows that there exists a subgroup $H$ with $\dist{X_1}{U_H} = 0$.
  Then $\dist{X_2}{U_H}=0$ follows by the triangle inequality.
\end{proof}

\begin{proof}[Proof of \Cref{pfr-entropy}, assuming \Cref{de-prop}.]
We choose a pair of variables $X_1,X_2$ (or more precisely, their distributions on $G$) to be any minimizer\footnote{We remark that an entropy functional minimization was previously employed by the fourth author in~\cite{tao-regularity} to give an alternative proof of the Szemer\'edi regularity lemma.} of the functional $\tau$.
  This optimization problem ranges over the square of the space of probability distributions on the finite set $G$. The usual topology on this space is compact, and it is straightforward that $\dist{-}{ -}$ is continuous with respect to this topology, so the minimum exists.

As $(X_1,X_2)$ minimizes $\tau$, by the contrapositive of \Cref{de-prop} we must have $\dist{X_1}{X_2}=0$,
and hence by \Cref{lem:100pc} there exists a subgroup $H \leq G$ such that $\dist{X_1}{U_H} = \dist{X_2}{U_H}=0$.
  Finally, we have
  \begin{align*}
    \eta (\dist{X_1^0}{U_H} + & \dist{X_2^0}{U_H})= \eta (\dist{X_1^0}{X_1} + \dist{X_2^0}{X_2}) \\ & = \tau[X_1;X_2]  \leq \tau[X_2^0;X_1^0] = (1+2\eta) \dist{X_1^0}{X_2^0},
  \end{align*}
and so with our choice $\eta = \frac{1}{9}$ we obtain the first claim of \Cref{pfr-entropy}.  Since
  $
    \lvert \dist{X^0_1}{U_H} - \dist{X^0_2}{U_H} \rvert \le \dist{X^0_1}{X^0_2}
  $
  by the triangle inequality, the second statement then follows.
\end{proof}

\begin{remark}\label{quant}
  The proof of \Cref{de-prop} is completely algorithmic, but our use of compactness here means that the proof of \Cref{pfr-entropy} is not.
  However, our argument can be modified to give a completely algorithmic proof, at the expense of slightly worse constants.

  One approach is to prove \Cref{de-prop} with~\eqref{d-opt} replaced by the stronger bound
  \begin{equation}\label{proper-decrement}
    \dist{X'_1}{X'_2} \leq (1 - \eta') \dist{X_1}{X_2} - \eta' \dist{X'_1}{X_1} - \eta' \dist{X'_2}{X_2},
  \end{equation}
  for some value $\eta'$ rather smaller than $\eta=\frac{1}{9}$.
  The proof is very similar and follows the same strategy as that outlined in \Cref{sec3}.
  This variant is perhaps even slightly easier to follow than the current proof, since the `fixed' variables $X_1^0, X_2^0$ no longer play a role.

  Once~\eqref{proper-decrement} is established, it can be applied iteratively  to give a sequence $(X_1^{t}, X_2^{t})$, $t = 0,1,\dots$ for which $\dist{X_1^{t}}{X_2^{t}}$ decays exponentially.
  In particular, for some $T = O(\log \dist{X^0_1}{X^0_2})$ we have $\dist{X_1^{T}}{X_2^{T}} \leq \eps_0$, where $\eps_0$ is the absolute constant appearing in~\cite[Proposition 1.3]{gmt}.
  By that result, $X_1^{T}$ and $X_2^{T}$ are both close to some variable $U_H$. One can then control the distance of $U_H$ from the original variables $X^0_1, X^0_2$ by summing~\eqref{proper-decrement} over $0 \leq t < T$ and applying the Ruzsa triangle inequality repeatedly.

  A version of this argument will be given in detail in our subsequent paper~\cite{ggmt-pfr-odd}.
\end{remark}
\begin{remark}\label{bog-remark}
   We sketch one further consequence of the variant argument discussed in \Cref{quant} above, namely a weak Bogulyubov-type result: if $|A+A| \leq K|A|$, then there is a subspace $H$ of $G$ of cardinality $\gg K^{-O(1)} |A|$ that is contained in an iterated sumset $mA$ for some $m = O(\log^{O(1)}(2+K))$.

  Indeed, if takes $X^0_1 = X^0_2 = U_A$ as in \Cref{equiv-app} and runs the above iteration, we see inductively from an inspection of the construction that each pair $X_1^t, X_2^t$ of random variables produced by the iterative process is contained in $m_t A$ for some $m_t = O(1)^t$.  Since it only takes $O(\log\log K)$ steps for the doubling constant to get within range of $\eps_0$, one can check (by inspection of the argument in~\cite[Proposition 1.3]{gmt}) that the group $H$ produced is then contained in $mA$ for some $m = O(\log^{O(1)}(2+K))$, and the claim then follows after some routine verifications.
\end{remark}

\section{Plan of the remaining argument}%
\label{sec3}

Let $k$ denote the quantity $k \coloneqq \dist{X_1}{X_2}$.
For the following preliminary discussion we write various estimates as $O(\eta k)$ for simplicity. To make the argument actually work with the parameter $\eta = \frac{1}{9}$ one of course needs to be more precise, as indeed we will be from~\eqref{first-est} onwards.

To prove \Cref{de-prop} we will try various choices of $X'_1, X'_2$ constructed using $X_1, X_2$ and show that at least one of them works.

To describe the choices we will try, consider a four-tuple of independent random variables $X_1, X_2, \tilde X_1, \tilde X_2$ where $X_1,\tilde X_1$ are copies of $X_1$ and $X_2,\tilde X_2$ are copies of $X_2$; we can in fact assume that all six variables $X_1,X_2,\tilde X_1,\tilde X_2,X^0_1,X^0_2$ are independent. Our primary choices for $(X'_1, X'_2)$ will be sums
\begin{equation}
  \label{sum-1}
  X'_1 = X_1 + \tilde X_2,\qquad X'_2 = X_2 + \tilde X_1
\end{equation}
or
\begin{equation}
  \label{sum-2}
  X'_1 = X_1 + \tilde X_1,\qquad X'_2 = X_2 + \tilde X_2
\end{equation}
or alternatively  `fibres'
\begin{equation}
  \label{fiber-1}
  X'_1 = (X_1 | X_1 + \tilde X_2= g),\qquad X'_2 = (X_2 | X_2 + \tilde X_1 = g')
\end{equation}
or
\begin{equation}
  \label{fiber-2}
  X'_1 = (X_1 | X_1 + \tilde X_1= g),\qquad X'_2 = (X_2 | X_2 + \tilde X_2 = g')
\end{equation}
for $g, g' \in G$.
A result that we call the \emph{fibring lemma} (which is~\cite[Proposition 1.4]{gmt}), concerning the behaviour of entropy doubling under homomorphisms, may be used to relate the distances that come up when we choose the $X'_i$ to be sums or fibres.
A self-contained account of this is provided in \Cref{sec4}.

The conclusion is that either one of these choices succeeds, or they all \emph{narrowly} (by $O(\eta k)$) fail to work, and the latter can happen only if all the inequalities we used were close to equalities. Motivated by this, one can go back and analyse what happens if the inequality in the fibring lemma was almost tight.
What we find in this case is that if neither choice~\eqref{sum-1},~\eqref{fiber-1} works to prove~\eqref{d-opt} then we have an upper bound
\begin{equation}\label{first-est-a}
  I_1 \coloneqq \bigI{ X_1+X_2 : \tilde X_1 + X_2 | X_1+X_2+\tilde X_1+\tilde X_2 } = O(\eta k).
\end{equation}
Informally, this says that $X_1 + X_2$ and $\tilde X_1 + X_2$ are almost independent conditioned on $X_1+X_2+\tilde X_1+\tilde X_2$.
Similarly, if neither choice~\eqref{sum-2},~\eqref{fiber-2} works to prove~\eqref{d-opt}, we obtain a bound
\begin{equation}\label{second-est-a}
  I_2 \coloneqq \bigI{ X_1+X_2 : X_1 + \tilde X_1 | X_1+X_2+\tilde X_1+\tilde X_2 } = O(\eta k).
\end{equation}
Finally,
\begin{equation}\label{third-est-a}
 I_3 \coloneqq \bigI{ \tilde X_1+X_2 : X_1 + \tilde X_1 | X_1+X_2+\tilde X_1+\tilde X_2 } = O(\eta k),
 \end{equation}
 since in fact we have $I_2 = I_3$, as may be seen by interchanging the names of $\tilde X_1$ and $X_1$.

If none of the choices~\eqref{sum-1},~\eqref{sum-2},~\eqref{fiber-1},~\eqref{fiber-2} works to prove~\eqref{d-opt}, all three estimates~\eqref{first-est-a},~\eqref{second-est-a} and~\eqref{third-est-a} hold.
In this case, we proceed to a part of the argument we refer to as the `endgame'.

Suppose first for simplicity that the mutual informations in~\eqref{first-est-a},\ \eqref{second-est-a} and~\eqref{third-est-a} were zero rather than merely small.
It follows that for any $s$ in the support of $X_1+X_2+ \tilde X_1+\tilde X_2$, the three random variables
\begin{align}
  \nonumber
  T_1&= (X_1+X_2 |X_1+X_2+\tilde X_1+\tilde X_2=s),\\
  \nonumber
  T_2&=(X_1+\tilde X_1 |X_1+X_2+\tilde X_1+\tilde X_2=s), \\
  T_3&=(\tilde X_1+X_2 |X_1+X_2+\tilde X_1+\tilde X_2=s)
  \label{three-variables}
\end{align}
are pairwise independent.
We also note that $T_1+T_2+T_3$ is constant (in fact it is identically zero).
However, for any trio $(T_1,T_2,T_3)$ of pairwise independent random variables such that $T_1+T_2+T_3$ is constant, a short calculation shows that
\[\dist{T_1}{T_2}  = \ent{T_3} - \tfrac{1}{2} \ent{T_1} - \tfrac{1}{2} \ent{T_2} \]
and similarly for cyclic permutations, hence
\begin{equation}
  \label{eq:i-eq-0}
  \dist{T_1}{T_2} + \dist{T_1}{T_3} + \dist{T_2}{T_3} = 0
\end{equation}
and so, taking $X'_1$ and $X'_2$ to be some pair of $T_1,T_2,T_3$, we have $\dist{X_1'}{X_2'} = 0$.
This is a very strong conclusion, and with some further estimation, it allows us to establish~\eqref{d-opt}. (The choice of $X'_1,X'_2$ here is inspired, albeit rather indirectly, by a step in an argument of Katz and Koester~\cite{kk}.)

If the three variables in~\eqref{three-variables} are merely `almost pairwise independent', as in~\eqref{first-est-a} and~\eqref{second-est-a}, then things are less straightforward.
Very roughly, sums of non-independent random variables are to sums of independent random variables as \emph{partial} sumsets $A+_E B$ along a bipartite graph $E$ are to full sumsets $A+B$.
Hence, it is natural to apply a variant of the entropic Balog--Szemer\'edi--Gowers lemma due to the fourth author~\cite[Lemma 3.3]{tao-entropy} (reproduced as \Cref{lem-bsg} here) to pass from the `almost independent' variables $(T_1,T_2)$ as above, for which $\ent{T_1+T_2}-\tfrac12 \ent{T_1} - \tfrac12 \ent{T_2}$ is small (or even negative), to variables $X_1',X_2'$ with $\dist{X_1'}{X_2'} = O(\eta k)$.
These variables $X'_1,X'_2$, which are given to us by \Cref{lem-bsg}, now establish~\eqref{d-opt}.

Examining the proof of \Cref{lem-bsg} carefully, the final choice of $X'_1$ that comes out of this argument is of the form
\begin{equation}
  \label{from-bsg}
  (X_1 + X_2 \,|\, \tilde X_1+ X_2 = t, X_1 + \tilde X_2 = s+t)
\end{equation}
or a related quantity obtained by permuting the variables, and similarly for $X'_2$ (using the same $s,t$ for both $X_1'$ and $X_2'$).

\subsubsection*{Motivating examples}
To understand the strategy, it is helpful to consider some cases of the form $X_1 = U_{A_1}$, $X_2 = U_{A_2}$ for various sets $A_1,A_2 \subseteq \F_2^n$, and to discuss which choices of $X'_1, X'_2$ give the desired estimate~\eqref{d-opt}. In this discussion is convenient to write $K \coloneqq e^k$.

Consider first the case in which $A_1=A_2=A$ and $A$ is a random subset of some subgroup $H \le \F_2^n$ with density $K^{-1}$ in $H$.
Then (almost surely) $\dist{X_1}{X_2} \approx k$.
In this case the choice~\eqref{sum-2}, that is to say $X'_1 = X_1 + \tilde X_1$ and $X'_2 = X_2 + \tilde X_2$, immediately establishes~\eqref{d-opt}. Indeed both $X'_1$ and $X'_2$ are close to the uniform distribution $U_H$ on $H$, so $\dist{X'_1}{X'_2} \approx 0$ and $\dist{X'_i}{X_i}\approx k/2$ for $i=1,2$, and~\eqref{d-opt} follows, with room to spare, by the triangle inequality.

Consider next the case in which $A_1 = \bigcup_{i = 1}^{m} (x_i + H)$ and $A_2 = \bigcup_{i=1}^m (y_i+ H)$, where $H < \F_2^n$ is a subgroup and the $x_i+H$ and $y_i+H$ are linearly independent in $G/H$. Setting $m \coloneqq K$ gives $\dist{X_1}{X_2} \approx k$.
In this case one can obtain~\eqref{d-opt} with the choice~\eqref{fiber-1}, that is to say $X'_1 = (X_1 | X_1 + \tilde X_2 = g)$ and $X'_2 = (X_2 | X_2 + \tilde X_1 = g')$ for any $g,g' \in A_1+A_2$.
Indeed, $X'_1$ is the uniform distribution on $A_1 \cap (A_2 + g)$, which if $g \in x_i+y_j+H$ is exactly the coset $x_i+H$ of $H$ (by linear independence). Similarly, $X'_2$ is uniform on a coset $y_j'+H$, so $\dist{X'_1}{X'_2} \approx 0$ and again~\eqref{d-opt} follows.
(Using the fibres in~\eqref{fiber-2} also gives a contradiction in this case, but the analysis is slightly more involved, since $A_1 \cap (A_1 + g)$ is typically a union of two cosets of $H$.)

In both of the above examples, one of the choices~\eqref{sum-1},~\eqref{sum-2},~\eqref{fiber-1} or~\eqref{fiber-2} already gives the estimate~\eqref{d-opt} and so it is not necessary to proceed to the `endgame'.

Here is a third example, a sort of combination of the previous two, for which the choices~\eqref{sum-1},~\eqref{sum-2},~\eqref{fiber-1} and~\eqref{fiber-2} all fail to work.
Let $A_1',A_2'$ be (independent) random subsets of density $1/m$ of the sets $A_1,A_2$ from the previous example, where now $m = \sqrt{K}$.
One may then check that $X_1 + \tilde X_1, X_2 + \tilde X_2$ resemble the uniform distribution on the union of $\approx K/2$ cosets of $H$, so $\dist{X'_1}{X'_2} \approx k - O(1)$ and we do not obtain~\eqref{d-opt} (if $k$ is large) with the choice~\eqref{sum-2}. The case of~\eqref{sum-1} is similar.

On the other hand, the variables $(X_1 | X_1 + \tilde X_2 = g)$, $(X_2 | X_2 + \tilde X_1 = g')$ will be uniform on $A_1 \cap (A_2 + g)$ and $A_2 \cap (A_1+g')$ respectively, and such sets will typically (when non-empty) resemble random subsets of a coset of $H$, of density $1/m^2 \approx 1/K$. For these variables we also have $\dist{X'_1}{X'_2} \approx k$, and so we do not obtain~\eqref{d-opt} with the choice~\eqref{fiber-1}. The case of~\eqref{fiber-2} is broadly similar.

We therefore proceed to the endgame and consider the three variables in~\eqref{three-variables}.
Since $X_1+X_2$ is roughly uniform on $B \coloneqq \bigcup_{i,j} (x_i+y_j+H)$, a similar analysis to the second example shows that $(X_1+X_2 | X_1+X_2+\tilde X_1 + \tilde X_2 = s)$ is roughly uniform on $B \cap (B+s)$, which is typically a union of four cosets of $H$, and similarly for $(\tilde X_1+X_2 | X_1+X_2+\tilde X_1 + \tilde X_2 = s)$.
Moreover, one can check that these two variables are `50\% independent' (in that knowing one of them narrows the choice of the other to two cosets of $H$).

If we then apply \Cref{lem-bsg}, we find variables $X'_1$, $X'_2$ of the form given in~\eqref{from-bsg}.
In other words, $X'_1$ is a sum of two variables of the form $(X_1 | X_1 + \tilde X_2 = a)$ and $(X_2 | X_2 + \tilde X_1 = b)$.  Each will be uniform on a random subset of a coset of $H$, and adding them together will give something close to uniform on a coset of $H$.  The case of $X'_2$ is similar, so we get $\dist{X'_1}{X'_2} \approx 0$ and~\eqref{d-opt} follows.
(Again, permuting the variables may give slighly different conclusions.)

Finally, we consider a further example over $G = \Z$ instead of $\F_2^n$.
Take $X_1,X_2$ to have a `discrete Gaussian' distribution $p(x) = C_0 e^{-x^2/2r^2}$ with some large `width' $r$ and a normalizing constant $C_0$.
By~\cite[Theorem~1.13]{tao-entropy}, the distance $\dist{X_1}{X_2} \approx \frac12 \log 2$ is roughly minimal among random variables on $\Z$ with large entropy, and hence \Cref{de-prop} cannot hold over $\Z$ in the form stated.

Concretely, one can show that sums $X_i+\tilde X_j$ as in~\eqref{sum-1},~\eqref{sum-2} are (roughly) discrete Gaussians of width $\sqrt2r$, and fibres $(X_i | X_i+\tilde X_j)$ as in~\eqref{fiber-1},~\eqref{fiber-2} are (shifted) discrete Gaussians of width about $r/\sqrt2$. Hence, all these have essentially the same value of $\dist{-}{-}$, and there is no way of `improving' these variables with any of the choices considered in \Cref{first,second}.
One could also see directly that the conditional near-independence statements such as
\[
  \I{X_1+X_2 : X_1 + \tilde X_1 | X_1+X_2+\tilde X_1 + \tilde X_2} \approx 0
\]
do indeed hold for discrete Gaussians.

In the above one-dimensional example, the entropic doubling constant is bounded, and so this example does not, by itself, present a strong obstruction to the entire approach.  However, by considering discrete gaussian examples in a high-dimensional lattice $\Z^d$, the entropic doubling constant is now close to $\frac{d}{2} \log 2$, which is unbounded, and all the moves considered previously do not significantly reduce this constant.

Even though discrete Gaussians over $\F_2$ do not exist, this example places significant constraints on what we can hope to prove in general.  In particular, it shows that the endgame argument must `see' the finite characteristic in an essential way.

\subsubsection*{The contrapositive formulation}
The above discussion and the motivating examples which followed it were framed in terms of proving \Cref{de-prop} directly, by making various choices of $X'_1$ and $X'_2$, and the authors found this most natural when thinking about the problem.

However, when it comes to recording the proofs, it turns out to be notationally simpler to argue in the contrapositive.
Thus, suppose henceforth that we have a pair $(X_1, X_2)$ of $G$-valued random variables with $\dist{X_1}{X_2} = k$ for some $k$, and suppose that
\begin{equation}
  \label{to-contradict'}
  \tau[X'_1;X'_2] \geq \tau[X_1;X_2]
\end{equation} for every pair of $G$-valued random variables $(X'_1, X'_2)$.  Using the definition of $\tau$ (see~\eqref{tau-def}) we may rewrite~\eqref{to-contradict'} as
\begin{equation}\label{to-contradict}
  \dist{X'_1}{X'_2} \geq k - \costa{X'_1} - \costb{X'_2}.
\end{equation}

As in the discussion above, the idea now is to test~\eqref{to-contradict} with various choices of $(X'_1, X'_2)$ generated from $(X_1, X_2)$, the aim being to deduce from this that $k = 0$.

Testing~\eqref{to-contradict} with the choices~\eqref{sum-1},~\eqref{fiber-1} will lead us to the conclusion that
\begin{equation}\label{first-est}
  I_1 \coloneqq \bigI{ X_1+X_2 : \tilde X_1 + X_2 | X_1+X_2+\tilde X_1+\tilde X_2 } \leq 2 \eta k
\end{equation}
(that is, a more precise version of~\eqref{first-est-a}). The details of this deduction are provided in \Cref{first}. Testing~\eqref{to-contradict} with the choices~\eqref{sum-2},~\eqref{fiber-2} leads to
\begin{equation}\label{second-est}
  I_2 \coloneqq \bigI{ X_1+X_2 : X_1 + \tilde X_1 | X_1+X_2+\tilde X_1+\tilde X_2 } \leq 2 \eta k + \frac{2 \eta (2 \eta k - I_1)}{1 - \eta}
  \end{equation}
(a more precise version of~\eqref{second-est-a}), and of course the same estimate holds for $I_3$, since in fact $I_2=I_3$. The details of this deduction are provided in \Cref{second}.

  Finally, armed with the knowledge that~\eqref{first-est},~\eqref{second-est} hold, we proceed to the endgame, using choices of $X'_1,X_2'$ arising from~\eqref{three-variables} and entropic Balog--Szemer\'edi--Gowers (i.e., of forms such as~\eqref{from-bsg}) to finally show that $k = 0$.
The details are given in \Cref{endgame}.

The main reason for arguing by contradiction is that it greatly simplifies our discussions of conditioned random variables.
Note that~\eqref{to-contradict} implies a conditioned version of itself:
for any random variables $(X_1',Y_1)$ and $(X_2',Y_2)$ we may apply~\eqref{to-contradict} with $X'_1, X'_2$ replaced by each of the conditioned random variables $(X'_1 | Y_1 = y_1) , (X'_2 | Y_2 = y_2)$, obtaining
\begin{align*}
  & \dist{(X'_1  | Y_1 = y_1)}{(X'_2 | Y_2 = y_2)} \\
  & \qquad\qquad\qquad \geq k \begin{aligned}[t] &- \costa{(X'_1 | Y_1 = y_1)} \\
&- \costb{(X'_2 | Y_2 = y_2)}. \end{aligned}
  \end{align*}
Multiplying by $p_{Y_1}(y_1) p_{Y_2}(y_2)$ and summing, we have
\begin{align}
  \nonumber
  \dist{X'_1 | Y_1}{X'_2 | Y_2}  \geq k  &- \costa{X'_1 | Y_1} \\
  \label{to-contradict-avg}
  &- \costb{X'_2 | Y_2}.
\end{align}

More discussion of the notion of conditional distance may be found at~\eqref{cond-dist-def}.

To make the same argument in the direct (non-contrapositive) direction we would pick some particular $(y_1,y_2)$ such that
\begin{align*}
  \dist{(X_1' | Y_1=y_1)}{(X_2' | Y_2=y_2)} &+ \costa{(X'_1 | Y_1 = y_1)} \\
                                            &+ \costb{(X'_2 | Y_2 = y_2)}
\end{align*}
is at most the (weighted) average value of the same quantity over all $(y_1,y_2)$.
If we have to perform many such steps in sequence, this becomes notationally very taxing.
However, this is purely a matter of preference: for instance, in \Cref{lem:abstract} and its proof we instead choose to argue directly.

\subsubsection*{Remarks on odd $p$}
We will handle the case $p$ odd in our forthcoming paper~\cite{ggmt-pfr-odd}.
All of the above steps go through, except for the final contradiction in the `endgame' where we used an argument specific to characteristic $2$.
To get a contradiction in characteristic $p$, it is necessary to run a variant of the argument with a $p$-partite `distance' function $\D[X_1;\dots; X_p]$ replacing $\dist{X_1}{X_2}$ as the main term in the functional to be minimized.
This introduces additional notational complexity: for instance we now have an array $(X_{ij})_{i,j \in \F_p}$ of random variables, and the fibring lemma must be generalized to this $p$-partite distance and applied $p-1$ times.

\section{Fibring lemma}%
\label{sec4}
Here we record the fibring lemma, that is to say~\cite[Proposition 1.4]{gmt}, with an explicit error term.  (The latter was mentioned in~\cite{gmt}, but only as a casual remark.)

 \begin{proposition}\label{projections-1}
   Let $\pi \colon H \to H'$ be a homomorphism between abelian groups and let $Z_1,Z_2$ be $H$-valued random variables. Then we have
   \[
     \dist{Z_1}{Z_2} \geq \dist{\pi(Z_1)}{\pi(Z_2)} + \dist{Z_1|\pi(Z_1)}{Z_2 |\pi(Z_2)}
   \]
   where the notation is as in~\eqref{cond-dist-def}.
   Moreover, if $Z_1,Z_2$ are taken to be independent, then the difference between the two sides is
\begin{equation}
\label{mut-info-cond} \bigI{ Z_1 - Z_2 : (\pi(Z_1), \pi(Z_2)) \; | \; \pi(Z_1 - Z_2)}.
\end{equation} \end{proposition}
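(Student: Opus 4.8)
The plan is to unfold both sides into sums of entropies and verify the identity by bookkeeping. Recall that by the definition of Ruzsa distance, with $Z_1', Z_2'$ independent copies of $Z_1, Z_2$,
\[
\dist{Z_1}{Z_2} = \ent{Z_1' - Z_2'} - \tfrac12\ent{Z_1'} - \tfrac12\ent{Z_2'},
\]
and the conditional distance (as defined at~\eqref{cond-dist-def}) of $Z_1$ given $\pi(Z_1)$ from $Z_2$ given $\pi(Z_2)$ is the corresponding weighted average over fibres. Since every quantity in the statement depends only on the marginal distributions of $Z_1$ and $Z_2$, we may assume from the outset that $Z_1, Z_2$ are independent — this is exactly the case in which we are asked to identify the defect, and the general inequality follows from it since non-negativity of~\eqref{mut-info-cond} is automatic (mutual information is non-negative). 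So it suffices to prove the \emph{equality}
\[
\dist{Z_1}{Z_2} - \dist{\pi(Z_1)}{\pi(Z_2)} - \dist{Z_1|\pi(Z_1)}{Z_2|\pi(Z_2)} = \bigI{Z_1 - Z_2 : (\pi(Z_1),\pi(Z_2)) \mid \pi(Z_1-Z_2)}
\]
for independent $Z_1, Z_2$.

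First I would expand the left-hand side. Write $W = Z_1 - Z_2$, so $\pi(W) = \pi(Z_1) - \pi(Z_2)$. The term $\dist{Z_1}{Z_2}$ contributes $\ent{W} - \tfrac12\ent{Z_1} - \tfrac12\ent{Z_2}$. The term $\dist{\pi(Z_1)}{\pi(Z_2)}$ contributes $\ent{\pi(W)} - \tfrac12\ent{\pi(Z_1)} - \tfrac12\ent{\pi(Z_2)}$. For the conditional term, using the standard fact that averaging the distance $\dist{(Z_1|\pi(Z_1)=a)}{(Z_2|\pi(Z_2)=b)}$ over independent $a \sim \pi(Z_1)$, $b\sim \pi(Z_2)$ gives
\[
\dist{Z_1|\pi(Z_1)}{Z_2|\pi(Z_2)} = \ent{W \mid \pi(Z_1),\pi(Z_2)} - \tfrac12\ent{Z_1\mid \pi(Z_1)} - \tfrac12\ent{Z_2\mid\pi(Z_2)},
\]
where in the first entropy we use that, conditioned on the values of $\pi(Z_1)$ and $\pi(Z_2)$, the difference of the two conditioned independent variables is distributed as $W$ conditioned on $(\pi(Z_1),\pi(Z_2))$. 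Now subtract: the half-entropy terms combine via the chain rule $\ent{Z_i} = \ent{\pi(Z_i)} + \ent{Z_i\mid\pi(Z_i)}$ and cancel completely, leaving
\[
\text{LHS} = \ent{W} - \ent{\pi(W)} - \ent{W\mid \pi(Z_1),\pi(Z_2)}.
\]

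Next I would rewrite the right-hand side. By definition of conditional mutual information,
\[
\bigI{W : (\pi(Z_1),\pi(Z_2)) \mid \pi(W)} = \ent{W\mid\pi(W)} - \ent{W \mid \pi(W), \pi(Z_1),\pi(Z_2)}.
\]
Since $\pi(W) = \pi(Z_1) - \pi(Z_2)$ is a deterministic function of the pair $(\pi(Z_1),\pi(Z_2))$, the conditioning on $\pi(W)$ in the last term is redundant, so that term equals $\ent{W\mid\pi(Z_1),\pi(Z_2)}$. And $\ent{W\mid\pi(W)} = \ent{W} - \ent{\pi(W)}$ since again $\pi(W)$ is a function of $W$. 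Hence the right-hand side equals $\ent{W} - \ent{\pi(W)} - \ent{W\mid\pi(Z_1),\pi(Z_2)}$, which matches the expression for the LHS computed above, completing the proof.

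The only genuinely delicate point — and the step I would be most careful about — is the identification of the conditional-distance term: one must check that for independent $Z_1, Z_2$, conditioning $Z_1$ on $\pi(Z_1) = a$ and $Z_2$ on $\pi(Z_2) = b$ and then taking the difference of independent samples yields precisely the law of $(W \mid \pi(Z_1) = a, \pi(Z_2) = b)$, and that the averaging over $(a,b)$ drawn \emph{independently} from the two pushforward laws (rather than jointly) is what the definition~\eqref{cond-dist-def} of conditional Ruzsa distance prescribes. Everything else is the chain rule and the observation that $\pi(W)$ is redundant given $(\pi(Z_1),\pi(Z_2))$; no inequality is needed for the equality case, and the asserted inequality in general is just non-negativity of mutual information together with the fact that all three distances depend only on marginals.
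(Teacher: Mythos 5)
Your proof is correct and follows essentially the same route as the paper's: reduce to the independent case, expand every Ruzsa distance into entropies using the chain rule identities $\ent{Z_i} = \ent{\pi(Z_i)} + \ent{Z_i\mid\pi(Z_i)}$ and $\ent{W\mid\pi(W)} = \ent{W}-\ent{\pi(W)}$, and identify the residual as the stated conditional mutual information. The only cosmetic difference is that the paper proves the inequality first (via submodularity, $\ent{W\mid\pi(Z_1),\pi(Z_2)}\le\ent{W\mid\pi(W)}$) and then identifies the gap, whereas you prove the equality directly and derive the inequality from non-negativity of conditional mutual information — which is the same fact in disguise.
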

\begin{proof}
Let $Z_1,Z_2$ be independent throughout.  We have
\begin{align*}
  & \bigdist{Z_1  |\pi(Z_1)}{ Z_2 |\pi(Z_2)} \\
  & = \bigH{ Z_1 - Z_2 | \pi(Z_1),\pi(Z_2)} - \tfrac{1}{2} \ent{Z_1 | \pi(Z_1)} - \tfrac{1}{2}\ent{Z_2 | \pi(Z_2)} \\
  & \leq  \bigH{ Z_1 - Z_2 | \pi(Z_1-Z_2)}  - \tfrac{1}{2} \ent{Z_1 | \pi(Z_1)} - \tfrac{1}{2}\ent{Z_2 | \pi(Z_2)} \\
  & = \dist{Z_1}{Z_2} - \dist{\pi(Z_1)}{\pi(Z_2)}.
\end{align*}
In the middle step, we used submodularity of entropy, and in the last step we used the fact that
\[ \bigH{ Z_1 - Z_2 |  \pi(Z_1-Z_2)} = \ent{Z_1 - Z_2} - \ent{\pi(Z_1-Z_2)}\] (since $Z_1 - Z_2$ determines $\pi(Z_1 - Z_2)$) and that
\[ \ent{Z_i| \pi(Z_i)} = \ent{Z_i} - \ent{\pi(Z_i)}\] (since $Z_i$ determines $\pi(Z_i)$).
This gives the claimed inequality. The difference between the two sides is precisely
\[ \bigH{Z_1 - Z_2  | \pi(Z_1 - Z_2)} - \bigH{Z_1 - Z_2  | \pi(Z_1),\pi(Z_2)}.\]
To rewrite this in terms of (conditional) mutual information, we use the identity
\[ \ent{A|B} - \ent{A | B,C} = \I{A : C | B},\] taking
$A \coloneqq Z_1 - Z_2$, $B \coloneqq \pi(Z_1 - Z_2)$ and $C \coloneqq (\pi(Z_1),\pi(Z_{2}))$, and noting that in this case $\ent{A | B,C} = \ent{A | C}$ since $C$ uniquely determines $B$.
This completes the proof.
\end{proof}

We extract the specific special case of this result that we will need in our arguments.

\begin{corollary}\label{cor-fibre}
   Let $Y_1,Y_2,Y_3$ and $Y_4$ be independent random variables taking values in some abelian group $G$.
  Then
\begin{align*}
 \nonumber
  & \dist{Y_1-Y_3}{Y_2-Y_4} + \bigdist{Y_1|Y_1-Y_3}{Y_2|Y_2-Y_4} \\
  &\qquad + \bigI{ Y_1-Y_2 : Y_2 - Y_4 | Y_1-Y_2-Y_3+Y_4 } = \dist{Y_1}{Y_2} + \dist{Y_3}{Y_4}.
\end{align*}
\end{corollary}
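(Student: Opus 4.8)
The plan is to read off \Cref{cor-fibre} from the explicit-error form of the fibring lemma, \Cref{projections-1}, applied to the subtraction map on a product group. Concretely, I would take $H \coloneqq G \times G$, $H' \coloneqq G$, and $\pi \colon H \to H'$ the homomorphism $\pi(a,b) \coloneqq a-b$, and set $Z_1 \coloneqq (Y_1, Y_3)$ and $Z_2 \coloneqq (Y_2, Y_4)$. Since $Y_1, Y_2, Y_3, Y_4$ are jointly independent, $Z_1$ and $Z_2$ are independent, so the ``moreover'' clause of \Cref{projections-1} applies and we get the identity
$\dist{Z_1}{Z_2} = \dist{\pi(Z_1)}{\pi(Z_2)} + \dist{Z_1 \mid \pi(Z_1)}{Z_2 \mid \pi(Z_2)} + \I{Z_1 - Z_2 : (\pi(Z_1), \pi(Z_2)) \mid \pi(Z_1 - Z_2)}$.
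It then remains to rewrite each of the four terms in the notation of the corollary.

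For the left-hand side, $Z_1 - Z_2 = (Y_1 - Y_2, Y_3 - Y_4)$, whose two coordinates are independent since $(Y_1,Y_2)$ is independent of $(Y_3,Y_4)$; combined with $\ent{Z_1} = \ent{Y_1} + \ent{Y_3}$ and $\ent{Z_2} = \ent{Y_2}+\ent{Y_4}$, this gives $\dist{Z_1}{Z_2} = \dist{Y_1}{Y_2} + \dist{Y_3}{Y_4}$, which is the right-hand side of \Cref{cor-fibre}. The term $\dist{\pi(Z_1)}{\pi(Z_2)}$ is literally $\dist{Y_1 - Y_3}{Y_2 - Y_4}$. For the conditional distance, fix values $d_1, d_2$ of $\pi(Z_1), \pi(Z_2)$: conditioned on $Y_1 - Y_3 = d_1$, the pair $(Y_1, Y_3)$ is the image of $(Y_1 \mid Y_1 - Y_3 = d_1)$ under the injective affine map $y \mapsto (y, y - d_1)$, so its entropy is unchanged, and the difference of two such conditioned pairs (for independent conditionings $d_1, d_2$) is the image of the corresponding conditioned difference $Y_1 - Y_2$ under $v \mapsto (v, v - (d_1 - d_2))$, hence also entropy-preserving. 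Therefore each conditioned Ruzsa distance is unchanged, and averaging over $(d_1,d_2)$ (the marginals of $\pi(Z_i)$ coinciding with those of $Y_1 - Y_3$, $Y_2 - Y_4$) yields $\dist{Z_1 \mid \pi(Z_1)}{Z_2 \mid \pi(Z_2)} = \dist{Y_1 \mid Y_1 - Y_3}{Y_2 \mid Y_2 - Y_4}$.

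Finally, the error term \eqref{mut-info-cond} becomes $\I{(Y_1 - Y_2, Y_3 - Y_4) : (Y_1 - Y_3, Y_2 - Y_4) \mid Y_1 - Y_2 - Y_3 + Y_4}$, since $\pi(Z_1 - Z_2) = Y_1 - Y_2 - Y_3 + Y_4$. Writing $S \coloneqq Y_1 - Y_2 - Y_3 + Y_4$, the relation $Y_3 - Y_4 = (Y_1 - Y_2) - S$ shows that, conditioned on $S$, the pair $(Y_1 - Y_2, Y_3 - Y_4)$ generates the same information as $Y_1 - Y_2$ alone; likewise $Y_1 - Y_3 = (Y_2 - Y_4) + S$ shows that, given $S$, the pair $(Y_1 - Y_3, Y_2 - Y_4)$ carries the same information as $Y_2 - Y_4$. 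Substituting these collapses the conditional mutual information to $\I{Y_1 - Y_2 : Y_2 - Y_4 \mid Y_1 - Y_2 - Y_3 + Y_4}$, and reassembling the four rewritten terms gives exactly \Cref{cor-fibre}. The only step that requires genuine care is the identification of the conditional-distance term: one must verify that restricting the pair $(Y_1, Y_3)$ to a fibre of $\pi$ and discarding the redundant coordinate leaves the Ruzsa distance untouched, i.e.\ that Ruzsa distance is invariant under applying (possibly different) injective affine maps whose difference remains injective affine — everything else is routine bookkeeping with entropy identities.
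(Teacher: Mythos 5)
Your proposal is correct and follows exactly the same route as the paper: applying \Cref{projections-1} to $Z_1 = (Y_1,Y_3)$, $Z_2 = (Y_2,Y_4)$ with the subtraction homomorphism $\pi(a,b) = a-b$ on $G \times G$, then identifying each of the four resulting terms. Your treatment of the conditional-distance term (via the fibrewise injective affine maps) and of the mutual-information term (using $S$ to eliminate redundant coordinates) matches the paper's argument, just spelled out in slightly more detail.
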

\begin{proof}  We apply \Cref{projections-1} with $H \coloneqq G \times G$, $H' \coloneqq G$, $\pi$ the subtraction homomorphism $\pi(x,y) \coloneqq x-y$, and with the random variables $Z_1 \coloneqq (Y_1,Y_3)$ and $Z_2 \coloneqq (Y_2,Y_4)$. Then by independence we easily calculate
\[
  \dist{Z_1}{Z_2} = \dist{Y_1}{Y_2} + \dist{Y_3}{Y_4}
\]
while by definition
\[
  \dist{\pi(Z_1)}{\pi(Z_2)} = \dist{Y_1-Y_3}{Y_2-Y_4}.
\]
Furthermore,
\[
  \dist{Z_1|\pi(Z_1)}{Z_2|\pi(Z_2)} = \dist{Y_1|Y_1-Y_3}{Y_2|Y_2-Y_4},
\]
since $Z_1=(Y_1,Y_3)$ and $Y_1$ are linked by an invertible affine transformation once $\pi(Z_1)=Y_1-Y_3$ is fixed, and similarly for $Z_2$ and $Y_2$.
Finally, we have
\begin{align*}
  &\bigI{Z_1 - Z_2 : (\pi(Z_1),\pi(Z_2)) \,|\, \pi(Z_1) + \pi(Z_2)} \\
  &\ = \bigI{(Y_1-Y_2, Y_3-Y_4) : (Y_1-Y_3, Y_2-Y_4) \,|\, Y_1-Y_2-Y_3+Y_4} \\
  &\ = \bigI{Y_1-Y_2 : Y_2-Y_4 \,|\, Y_1-Y_2-Y_3+Y_4}
\end{align*}
where in the last line we used the fact that $(Y_1-Y_2, Y_1-Y_2-Y_3+Y_4)$ uniquely determine $Y_3-Y_4$ and similarly
$(Y_2-Y_4, Y_1-Y_2-Y_3+Y_4)$ uniquely determine $Y_1-Y_3$.
\end{proof}
\begin{remark}\label{rem:char2}
  Of course, in our main characteristic $2$ setting, the minus signs in \Cref{projections-1,cor-fibre} may be replaced with plus signs. \end{remark}

\section{First estimate}\label{first}

Recall that in this and subsequent sections we are working on the assumption that~\eqref{to-contradict} and its conditioned variant~\eqref{to-contradict-avg} hold, aiming to prove that $k = 0$ and hence conclude (the contrapositive of) Proposition~\ref{de-prop}.

Recall also that $X_1, X_2, \tilde X_1, \tilde X_2$ are independent random variables, with $X_1,\tilde X_1$ copies of $X_1$ and $X_2,\tilde X_2$ copies of $X_2$.

In this section we establish the upper bound~\eqref{first-est}, which was that
\[
  I_1 \coloneqq \bigI{ X_1+X_2 : \tilde X_1 + X_2 | X_1+X_2+\tilde X_1+\tilde X_2 } \leq 2 \eta k.
\]
We apply \Cref{cor-fibre} (and Remark \ref{rem:char2}) with the choice
\[
  (Y_1,Y_2,Y_3,Y_4) \coloneqq (X_1, X_2, \tilde X_2, \tilde X_1).
\]
It gives
\begin{align}
  \nonumber
&\dist{X_1+\tilde X_2}{X_2+\tilde X_1} + \bigdist{X_1|X_1+\tilde X_2}{X_2|X_2+\tilde X_1} \\
\label{second-main}
&\quad + \bigI{ X_1+ X_2 : \tilde X_1 + X_2 \,|\, X_1 + X_2 + \tilde X_1 + \tilde X_2 } = 2k,
\end{align}
since $\dist{X_1}{X_2} = k$.
Applying~\eqref{to-contradict},~\eqref{to-contradict-avg}, we have
\begin{align*}
  \dist{X_1+\tilde X_2}{X_2+\tilde X_1} \geq k &- \costa{X_1+\tilde X_2} \\& \qquad- \costb{X_2+\tilde X_1}
\end{align*}
and
\begin{align}
  \nonumber
  & \dist{X_1|X_1+\tilde X_2}{X_2|X_2+\tilde X_1}  \\ \nonumber & \qquad\quad \geq k - \costa{X_1 | X_1 + \tilde X_2} \\
  & \qquad\qquad\qquad\qquad  - \costb{X_2 | X_2 + \tilde X_1}.
   \label{second-tc2}
\end{align}
It therefore suffices to prove that
\begin{align}
\nonumber
&(\dist{X_1^0}{X_1+\tilde X_2}-\dist{X_1^0}{X_1})+ (\dist{X_2^0}{X_2+\tilde X_1}-\dist{X_2^0}{X_2})\\
&\qquad \qquad +(\dist{X_1^0}{X_1|X_1+\tilde X_2}-\dist{X_1^0}{X_1}) \nonumber \\ & \qquad \qquad \qquad \qquad  +(\dist{X_2^0}{X_2|X_2+\tilde X_1}-\dist{X_2^0}{X_2}) \leq 2k.
\label{suff-for-first-est}
\end{align}

We pause to state some lemmas which we will use for bounding a number of similar expressions, both here and in the next two sections.

The first is a bound relating conditioned and unconditioned variants of the Ruzsa distance.

\begin{lemma}%
  \label{cond-dist-fact}
  Suppose that $(X, Z)$ and $(Y, W)$ are random variables, where $X, Y$ take values in some abelian group. Then
  \[    \dist{X  | Z}{Y | W} \leq \dist{X}{Y} + \tfrac{1}{2} \I{X : Z} + \tfrac{1}{2} \I{Y : W}.\]
\end{lemma}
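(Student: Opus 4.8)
The plan is to unfold the definition of conditional Ruzsa distance and reduce the claimed inequality to a statement about conditional entropies, then exploit the sub-additivity (chain rule) of entropy together with a basic entropy inequality for differences of random variables.

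First I would recall the definition $\dist{X\,|\,Z}{Y\,|\,W} = \sum_{z,w} p_Z(z)p_W(w)\,\dist{(X|Z=z)}{(Y|W=w)}$, where each term uses independent copies; equivalently, if we take $(X,Z)$ and $(Y,W)$ to be independent of each other, this equals $\ent{(X-Y)\,|\,Z,W} - \tfrac12\ent{X|Z} - \tfrac12\ent{Y|W}$, with $X-Y$ formed from the conditionally-drawn copies. I would compare this with $\dist{X}{Y} = \ent{X-Y} - \tfrac12\ent{X} - \tfrac12\ent{Y}$ (again with $X,Y$ independent). The differences $\ent{X} - \ent{X|Z} = \I{X:Z}$ and $\ent{Y}-\ent{Y|W} = \I{Y:W}$ handle the two mutual information terms exactly. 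So the whole inequality collapses to showing
\[
\ent{(X-Y)\,|\,Z,W} \leq \ent{X-Y},
\]
where on the left $X,Y$ are the conditionally-sampled independent copies and on the right they are unconditional independent copies — but by the way conditional distance is set up, the left-hand random variable $X-Y$ conditioned on $(Z,W)=(z,w)$ has the same distribution as $(X|Z=z)-(Y|W=w)$, so $\ent{(X-Y)|Z,W}$ is literally a conditional entropy, which is bounded above by the unconditional entropy $\ent{X-Y}$ of the appropriate variable. The one point requiring care is that the distribution of the unconditional $X-Y$ (with $X\sim p_X$, $Y\sim p_Y$ independent) is the $(p_Z,p_W)$-average of the conditional distributions of $(X|Z=z)-(Y|W=w)$; hence by concavity of entropy (equivalently, "conditioning reduces entropy"), $\ent{(X-Y)|Z,W} = \sum_{z,w}p_Z(z)p_W(w)\ent{(X|Z=z)-(Y|W=w)} \leq \ent{X-Y}$.

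Concretely, I would write $\dist{X|Z}{Y|W} = \sum_{z,w} p_Z(z)p_W(w)\big(\ent{(X|Z=z)-(Y|W=w)} - \tfrac12\ent{X|Z=z} - \tfrac12\ent{Y|W=w}\big)$, split the sum into three pieces, use $\sum_z p_Z(z)\ent{X|Z=z} = \ent{X|Z} = \ent{X} - \I{X:Z}$ and similarly for $Y$, and bound $\sum_{z,w} p_Z(z)p_W(w)\ent{(X|Z=z)-(Y|W=w)} \leq \ent{X-Y}$ by concavity of the entropy function applied to the mixture of distributions. Assembling these three bounds gives exactly $\dist{X|Z}{Y|W} \leq \ent{X-Y} - \tfrac12\ent{X} - \tfrac12\ent{Y} + \tfrac12\I{X:Z} + \tfrac12\I{Y:W} = \dist{X}{Y} + \tfrac12\I{X:Z} + \tfrac12\I{Y:W}$.

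The main obstacle — really the only subtlety — is bookkeeping around the fact that the "$X-Y$" appearing in $\dist{X}{Y}$ uses fresh independent copies, so one must verify that the relevant mixture identity holds: the law of $X'-Y'$ for independent $X'\sim p_X$, $Y'\sim p_Y$ is the $p_Z\otimes p_W$-mixture of the laws of $(X|Z=z)-(Y|W=w)$. This is immediate since $p_X = \sum_z p_Z(z)\,p_{X|Z=z}$ and convolution is bilinear, but it is the step where a reader could get confused about which variables are independent of which, so I would state it explicitly before invoking concavity of entropy.
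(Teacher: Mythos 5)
Your argument is correct and is essentially the same as the paper's: both rewrite the conditional Ruzsa distance as $\ent{X'-Y'\,|\,Z',W'} - \tfrac12\ent{X'|Z'} - \tfrac12\ent{Y'|W'}$ (the paper's~\eqref{cond-dist-alt}), bound the first term by $\ent{X'-Y'}$ via ``conditioning reduces entropy'', and then recognize the remainder as $\dist{X}{Y} + \tfrac12\I{X:Z} + \tfrac12\I{Y:W}$. Your extra discussion of the mixture-of-laws identity is a correct unpacking of why $\ent{(X-Y)|Z,W}\le\ent{X-Y}$ holds here, but it is the same step the paper dispatches in one line by citing~\eqref{cond-dec}.
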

\begin{proof}
The definition of conditional distance is given in~\eqref{cond-dist-def}.

Using the alternative expression~\eqref{cond-dist-alt}, if $(X',Z'), (Y',W')$ are independent copies of the variables $(X,Z)$, $(Y,W)$, we have
\begin{align*}
  \dist{X  | Z}{Y | W} &= \ent{X'-Y'|Z',W'} - \tfrac{1}{2} \ent{X'|Z'} - \tfrac{1}{2}\ent{Y'|W'} \\
                       &\le \ent{X'-Y'}- \tfrac{1}{2} \ent{X'|Z'} - \tfrac{1}{2}\ent{Y'|W'} \\
                       &= \dist{X'}{Y'} + \tfrac{1}{2} \I{X' : Z'} + \tfrac{1}{2} \I{Y' : W'}.
\end{align*}
Here, in the middle step we used~\eqref{cond-dec}, and in the last step we used the definitions of $\dist{-}{-}$ and $\I{-}$.
\end{proof}

In the proof of the next lemma we will use the fact that, for any independent random variables $X, Y, Z$ taking values in an abelian group, we have the inequality
\begin{equation}\label{kv-2}
  \ent{X + Y + Z} - \ent{X + Y} \leq \ent{Y+Z} - \ent{Y}.
\end{equation}
This is a result of Madiman~\cite[Theorem I]{madiman}, and is a more general form of an inequality of Kaimanovich and Vershik~\cite[Proposition 1.3]{kv}. It can be viewed as an entropy analogue of an inequality of Pl\"unnecke~\cite{plunnecke}. For the convenience of the reader, we give the proof in \Cref{entropy-app}.

\begin{lemma}\label{first-useful}
  Let $X, Y, Z$ be random variables taking values in some abelian group, and with $Y, Z$ independent. Then we have\footnote{We thank Floris van Doorn for noting a sign discrepancy in a previous version of this statement, which was uncovered as part of the effort to formalize the results of this paper in {\tt Lean 4}.}
\begin{align}\nonumber \dist{X}{Y - Z} - \dist{X}{Y} &  \leq \tfrac{1}{2} (\ent{Y-Z} - \ent{Y}) \\ & = \tfrac{1}{2} \dist{Y}{Z} + \tfrac{1}{4} \ent{Z} - \tfrac{1}{4} \ent{Y} \label{lem51-a} \end{align}
and
\begin{align}\nonumber
\dist{X}{Y|Y-Z} - \dist{X}{Y} & \leq \tfrac{1}{2} \bigl(\ent{Y-Z} - \ent{Z}\bigr) \\ & = \tfrac{1}{2} \dist{Y}{Z} + \tfrac{1}{4} \ent{Y} - \tfrac{1}{4} \ent{Z}.
  \label{ruzsa-3}
\end{align}
\end{lemma}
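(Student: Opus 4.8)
The two stated equalities are purely formal: since $Y,Z$ are independent, the definition~\eqref{ruz-dist-def} gives $\dist{Y}{Z} = \ent{Y-Z} - \tfrac12\ent{Y} - \tfrac12\ent{Z}$, and substituting this into the right-hand sides of~\eqref{lem51-a} and~\eqref{ruzsa-3} produces $\tfrac12(\ent{Y-Z}-\ent{Y})$ and $\tfrac12(\ent{Y-Z}-\ent{Z})$ respectively. So the real content is the two inequalities. For both I would first observe that every quantity in sight depends only on the individual distributions of $X$, $Y$ and $Z$: the law of $Y-Z$, and the joint law of $(Y,Y-Z)$, are both determined by those of $Y$ and $Z$ precisely because $Y$ and $Z$ are independent, and $\dist{-}{-}$ and conditional distance depend only on such marginal data. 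Hence we may assume without loss of generality that $X,Y,Z$ are mutually independent.

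To prove~\eqref{lem51-a}, I would expand both distances using this independence: $\dist{X}{Y-Z} = \ent{X-Y+Z} - \tfrac12\ent{X} - \tfrac12\ent{Y-Z}$ and $\dist{X}{Y} = \ent{X-Y} - \tfrac12\ent{X} - \tfrac12\ent{Y}$. Subtracting and using the equality just noted, the claim reduces to the entropy inequality $\ent{X-Y+Z} - \ent{X-Y} \leq \ent{Y-Z} - \ent{Y}$. This is exactly the Pl\"unnecke-type inequality~\eqref{kv-2} of Madiman applied to the independent triple $(X,-Y,Z)$, after noting that $\ent{-Y+Z} = \ent{Y-Z}$ and $\ent{-Y} = \ent{Y}$.

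To prove~\eqref{ruzsa-3}, I would use the formula~\eqref{cond-dist-alt} for conditional distance with the first variable unconditioned (and $X$ independent of $(Y,Y-Z)$), which gives $\dist{X}{Y|Y-Z} = \ent{X-Y|Y-Z} - \tfrac12\ent{X} - \tfrac12\ent{Y|Y-Z}$. Here $\ent{Y|Y-Z} = \ent{Y,Y-Z} - \ent{Y-Z} = \ent{Y,Z} - \ent{Y-Z} = \ent{Y} + \ent{Z} - \ent{Y-Z}$, since $(Y,Y-Z)$ and $(Y,Z)$ determine one another and $Y,Z$ are independent. Substituting this, the expansion of $\dist{X}{Y}$, and the equality already noted, the inequality collapses to $\ent{X-Y|Y-Z} \leq \ent{X-Y}$, i.e.\ the fact that conditioning does not increase entropy.

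Neither calculation presents a genuine difficulty; the only step needing a little care is the reduction to mutually independent $X,Y,Z$ together with the correct unfolding of conditional distance via~\eqref{cond-dist-alt}. The one structural point worth flagging is that~\eqref{lem51-a} really does invoke the Pl\"unnecke-type input~\eqref{kv-2}, whereas~\eqref{ruzsa-3} needs only submodularity of entropy in the guise of ``conditioning reduces entropy'', which is exactly why the two error terms treat $Y$ and $Z$ asymmetrically.
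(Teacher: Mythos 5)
Your proof is correct and follows essentially the same path as the paper: for \eqref{lem51-a} you reduce to the Madiman/Kaimanovich--Vershik inequality \eqref{kv-2}, exactly as the paper does, and for \eqref{ruzsa-3} your direct expansion via \eqref{cond-dist-alt} and the observation that conditioning cannot increase entropy is just the content of \Cref{cond-dist-fact} unfolded, which the paper instead cites (combined with the same computation $\I{Y:Y-Z}=\ent{Y-Z}-\ent{Z}$). The minor presentational difference — inlining \Cref{cond-dist-fact} rather than invoking it — does not change the argument.
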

\begin{proof}
We first prove~\eqref{lem51-a}. We may assume (taking an independent copy) that $X$ is independent of $Y, Z$. Then we have
\begin{align*}  \dist{X}{Y-Z} & - \dist{X}{Y} \\ & = \ent{X - Y + Z} - \ent{X-Y} - \tfrac{1}{2}\ent{Y - Z} + \tfrac{1}{2} \ent{Y}.\end{align*}
Combining this with~\eqref{kv-2} (applied with $Y$ replaced by $-Y$) gives the required bound. The second form of the result is immediate from the definition of $\dist{Y}{Z}$, since $Y, Z$ are independent.

Turning to~\eqref{ruzsa-3}, we have
\begin{align*} \I{Y : Y-Z} & = \ent{Y} + \ent{Y - Z} - \ent{Y, Y - Z} \\ & = \ent{Y} + \ent{Y - Z} - \ent{Y, Z}  = \ent{Y - Z} - \ent{Z},\end{align*}
and so~\eqref{ruzsa-3} is a consequence of~\Cref{cond-dist-fact}.
Once again the second form of the result is immediate from the definition of $\dist{Y}{Z}$.
\end{proof}

We return to our main task of establishing~\eqref{suff-for-first-est}, and hence~\eqref{first-est}.

As usual, in the main setting $G=\F_2^n$ we may replace all minus signs with plus signs in the statement of \Cref{first-useful}.
Hence by \Cref{first-useful} (and recalling that $k$ is defined to be $\dist{X_1}{X_2}$) we have
\[ \dist{X^0_1}{X_1+\tilde X_2} - \dist{X^0_1}{X_1} \leq \tfrac{1}{2} k + \tfrac{1}{4} \ent{X_2} - \tfrac{1}{4} \ent{X_1},\] \[  \dist{X^0_2}{X_2+\tilde X_1} - \dist{X^0_2}{X_2} \leq \tfrac{1}{2} k + \tfrac{1}{4} \ent{X_1} - \tfrac{1}{4} \ent{X_2},\]
\begin{equation}  \dist{X_1^0}{X_1|X_1+\tilde X_2} - \dist{X_1^0}{X_1} \leq \tfrac{1}{2} k + \tfrac{1}{4} \ent{X_1} - \tfrac{1}{4} \ent{X_2}
                                 \label{second-dist1}
\end{equation}
and
\begin{equation}
  \label{second-dist2}
  \dist{X_2^0}{X_2|X_2+\tilde X_1} - \dist{X_2^0}{X_2} \leq \tfrac{1}{2}k + \tfrac{1}{4} \ent{X_2} - \tfrac{1}{4} \ent{X_1}.
\end{equation}
Adding all these inequalities, we obtain~\eqref{suff-for-first-est}.

For use in the next two sections, we note that subtracting~\eqref{second-tc2} from~\eqref{second-main}, and combining the resulting inequality with~\eqref{second-dist1} and~\eqref{second-dist2} gives the bound
\[
  \dist{X_1+\tilde X_2}{X_2+\tilde X_1} \le (1 + \eta) k - I_1,
\]
which is equivalent to
\begin{equation}
  \label{HS-bound}
  \bigH{X_1+X_2+\tilde X_1+\tilde X_2} \le \tfrac{1}{2} \ent{X_1}+\tfrac{1}{2} \ent{X_2} + (2 + \eta) k - I_1.
\end{equation}
One could also bound the left-hand side using~\eqref{kv-2} (twice), but by making use of our hypothesis~\eqref{to-contradict-avg} as above we obtain a slightly better constant.

\section{Second estimate}\label{second}

In this section we establish the upper bound~\eqref{second-est}, which was
\[
  I_2 \coloneqq \bigI{ X_1+X_2 : X_1 + \tilde X_1 | X_1+X_2+\tilde X_1+\tilde X_2 } \leq 2 \eta k + \frac{2 \eta (2 \eta k - I_1)}{1 - \eta}.
\]
We apply \Cref{cor-fibre} (and Remark \ref{rem:char2}), but now with the choice
\[
  (Y_1,Y_2,Y_3,Y_4) \coloneqq (X_2, X_1, \tilde X_2, \tilde X_1).
\]
Now \Cref{cor-fibre} can be rewritten as
\begin{align*}
  &\dist{X_1+\tilde X_1}{X_2+\tilde X_2} + \bigdist{X_1|X_1+\tilde X_1}{X_2|X_2+\tilde X_2} \\
  &\quad + \bigI{ X_1+X_2 : X_1 + \tilde X_1 \,|\, X_1+X_2+\tilde X_1+\tilde X_2 } = 2k,
\end{align*}
recalling once again that $k \coloneqq \dist{X_1}{X_2}$.  From~\eqref{to-contradict} and~\eqref{to-contradict-avg} as before, one has
\begin{align}
  \nonumber
  \dist{X_1+\tilde X_1}{X_2+\tilde X_2} \geq k &- \costa{X_1+\tilde X_1} \\ &- \costb{X_2+\tilde X_2}
\label{1122}
\end{align}
and
\begin{align*}
  \dist{X_1|X_1+\tilde X_1}{X_2|X_2+\tilde X_2}   \geq k &- \costa{X_1|X_1+\tilde X_1} \\& - \costb{X_2|X_2+\tilde X_2} .
\end{align*}
Now \Cref{first-useful} gives
\begin{equation}\label{1122a} \dist{X^0_1}{X_1+\tilde X_1} - \dist{X^0_1}{X_1} \leq \tfrac{1}{2} \dist{X_1}{X_1},\end{equation}
\begin{equation}\label{1122b}
  \dist{X^0_2}{X_2+\tilde X_2} - \dist{X^0_2}{X_2} \leq \tfrac{1}{2} \dist{X_2}{X_2},
\end{equation}
\[
  \dist{X^0_1}{X_1|X_1+\tilde X_1} -  \dist{X^0_1}{X_1} \leq  \tfrac{1}{2} \dist{X_1}{X_1},
\]
and
\[
  \dist{X^0_2}{X_2|X_2+\tilde X_2} -  \dist{X^0_2}{X_2} \leq \tfrac{1}{2} \dist{X_2}{X_2}.
  \]
Combining all these inequalities and cancelling terms, we obtain
\begin{equation}\label{combined}
\I{ X_1+X_2 : X_1 + \tilde X_1 | X_1+X_2+\tilde X_1+\tilde X_2 } \leq \eta ( \dist{X_1}{X_1} + \dist{X_2}{X_2} ).
\end{equation}
One could bound the right-hand side by $4\eta k$ using the Ruzsa triangle inequality, but a more efficient approach is as follows.  First, by combining~\eqref{1122},~\eqref{1122a} and~\eqref{1122b}, we obtain
\begin{equation}\label{d12}
  \dist{X_1+\tilde X_1}{X_2+\tilde X_2} \geq k - \frac{\eta}{2} ( \dist{X_1}{X_1} + \dist{X_2}{X_2} ).
\end{equation}
We may also expand
\begin{align*}
 & \dist{X_1+\tilde X_1}{X_2+\tilde X_2} \\ &= \ent{ X_1+\tilde X_1 + X_2 + \tilde X_2}  - \tfrac{1}{2} \ent{X_1+\tilde X_1} - \tfrac{1}{2} \ent{X_2+\tilde X_2} \\
  &= \ent{ X_1+\tilde X_1 + X_2 + \tilde X_2}  - \tfrac{1}{2} \ent{X_1} - \tfrac{1}{2} \ent{X_2}  \\ & \qquad\qquad\qquad   - \tfrac{1}{2} \left( \dist{X_1}{X_1} + \dist{X_2}{X_2} \right),
\end{align*}
and hence by~\eqref{HS-bound}
\[
  \dist{X_1+\tilde X_1}{X_2+\tilde X_2} \leq (2+\eta) k - \tfrac{1}{2} \left( \dist{X_1}{X_1} + \dist{X_2}{X_2} \right) - I_1.
\]
Combining this bound with~\eqref{d12} we obtain
\begin{equation}\label{x12}
  \dist{X_1}{X_1} + \dist{X_2}{X_2} \leq 2 k + \frac{2(2 \eta k - I_1)}{1-\eta}.
\end{equation}
Therefore by~\eqref{combined} we have the desired bound
\[
\I{ X_1+X_2 : X_1 + \tilde X_1 | X_1+X_2+\tilde X_1+\tilde X_2 } \leq 2 \eta k + \frac{2\eta(2\eta k - I_1)}{1-\eta}.
\]

\section{Endgame}\label{endgame}

In this section we conclude the proof of \Cref{de-prop}.  Let us begin by recording an inequality which will be used several times in the calculations below.
\begin{lemma}\label{second-useful}
Let $X, Y, Z, Z'$ be random variables taking values in some abelian group, and with $Y, Z, Z'$ independent. Then we have
\begin{align}\nonumber
& \dist{X}{Y - Z | Y - Z - Z'} - \dist{X}{Y} \\ & \qquad \leq \tfrac{1}{2} ( \ent{Y - Z - Z'} + \ent{Y - Z} - \ent{Y} - \ent{Z'}).\label{7111}
\end{align}
\end{lemma}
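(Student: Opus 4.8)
The plan is to deduce \eqref{7111} from Lemma~\ref{first-useful} by chaining the two bounds \eqref{lem51-a} and \eqref{ruzsa-3} across the intermediate variable $Y-Z$.

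First I would set $W \coloneqq Y - Z$. Since $Y,Z,Z'$ are independent, the pair $(Y,Z)$ is independent of $Z'$, and hence $W$ is independent of $Z'$ (this is the one place the full three-fold independence, rather than mere pairwise independence, is used). Applying \eqref{lem51-a} to the triple $X,Y,Z$ --- legitimate since $Y,Z$ are independent --- gives
\[
  \dist{X}{Y-Z} - \dist{X}{Y} \leq \tfrac12\bigl(\ent{Y-Z} - \ent{Y}\bigr).
\]
Next, applying \eqref{ruzsa-3} with the triple $X,W,Z'$ in the role of $X,Y,Z$ there --- legitimate since $W,Z'$ are independent --- gives
\[
  \dist{X}{W \mid W - Z'} - \dist{X}{W} \leq \tfrac12\bigl(\ent{W-Z'} - \ent{Z'}\bigr) = \tfrac12\bigl(\ent{Y-Z-Z'} - \ent{Z'}\bigr).
\]
Since $W = Y-Z$ and $W - Z' = Y - Z - Z'$, adding these two displays makes the $\dist{X}{Y-Z}$ terms telescope and yields exactly the claimed inequality \eqref{7111}.

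Equivalently, one can bypass \eqref{ruzsa-3} and argue from scratch: Lemma~\ref{cond-dist-fact} applied with the unconditioned variable $X$ and the conditioned variable $(Y-Z)\mid(Y-Z-Z')$ gives
\[
  \dist{X}{Y-Z \mid Y-Z-Z'} \leq \dist{X}{Y-Z} + \tfrac12\,\I{Y-Z : Y-Z-Z'},
\]
and a short entropy computation --- using that $(Y-Z,\,Y-Z-Z')$ and $(Y-Z,\,Z')$ determine each other, together with the independence of $Y-Z$ and $Z'$ --- shows $\I{Y-Z:Y-Z-Z'} = \ent{Y-Z-Z'} - \ent{Z'}$; combining this with \eqref{lem51-a} as above again finishes the proof.

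There is no real obstacle here: the argument is two invocations of Lemma~\ref{first-useful} plus bookkeeping. The only points that require care are verifying the independence hypotheses before each substitution --- especially that $Y-Z$ is independent of $Z'$ --- and correctly matching the conditioning notation $\dist{X}{Y-Z \mid Y-Z-Z'}$ with $\dist{X}{W \mid W-Z'}$ after setting $W = Y-Z$. As usual in our characteristic~$2$ setting the minus signs may all be replaced by plus signs, but the proof above is written for a general abelian group.
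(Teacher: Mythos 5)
Your proof is correct and is essentially identical to the paper's: the paper also obtains~\eqref{7111} by applying~\eqref{ruzsa-3} with the substitution $Y\mapsto Y-Z$, $Z\mapsto Z'$ and then adding~\eqref{lem51-a}. Your added checks (that $Y-Z$ is independent of $Z'$, and the alternative derivation via \Cref{cond-dist-fact}) are sound but just make explicit what the paper compresses into ``with a change of variables.''
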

\begin{proof}
By~\eqref{ruzsa-3} (with a change of variables) we have
\[ \dist{X}{Y - Z | Y - Z - Z'} - \dist{X}{Y - Z} \leq \tfrac{1}{2}( \ent{Y - Z - Z'} - \ent{Z'}).\]
Adding this to~\eqref{lem51-a} gives the result.
\end{proof}

Turning now to the main argument, let $X_1,X_2,\tilde X_1,\tilde X_2$ be as before, and introduce the random variables
\[ U \coloneqq X_1 + X_2, \qquad V \coloneqq \tilde X_1 + X_2, \qquad W \coloneqq X_1 + \tilde X_1\] and
\[   S \coloneqq X_1 + X_2 + \tilde X_1 + \tilde X_2.\]
From the definitions~\eqref{first-est-a},~\eqref{second-est-a},~\eqref{third-est-a} of $I_1, I_2, I_3$ and the above notation, we see that
\[
  I_1 = \I{U : V \, | \, S}, \qquad I_2 = \I{W : U \, | \, S}, \qquad I_3 = \I{V : W \, | \,S}.
\]
Recall from~\eqref{first-est} that $I_1 \leq 2 \eta k$. From~\eqref{second-est} (and since $I_2 = I_3$) we have the inequalities
\[   \I{V : W \, | \,S} , \;  \I{W : U \, | \, S} \leq 2 \eta k + \frac{2\eta(2 \eta k - I_1)}{1-\eta} .
\]
Summing these two inequalities and the equality $\I{U : V \, | \, S} = I_1$ gives
\begin{align}
  \nonumber
   \label{uvw-s} \I{U : V \, | \, S} + \I{V : W \, | \,S} &+ \I{W : U \, | \, S}\\
   \nonumber
   &\leq I_1+4\eta k+ \frac{4\eta(2 \eta k - I_1)}{1-\eta}\\
  &= 6 \eta k - \frac{1 - 5 \eta}{1-\eta} (2 \eta k - I_1).
\end{align}
We encourage the reader to read the argument assuming first that $I_1 = 2 \eta k$, in which case the calculations are cleaner.

We assemble some preliminary estimates on distances. By \Cref{second-useful} (again replacing all minus signs with plus signs, and taking $X = X_1^0$, $Y = X_1$, $Z = X_2$ and $Z' = \tilde X_1 + \tilde X_2$, so that $Y + Z = U$ and $Y + Z + Z' = S$) we have, noting that $\ent{Y+Z} = \ent{Z'}$,
\[
  \dist{X^0_1}{U|S} - \dist{X^0_1}{X_1} \leq \tfrac{1}{2} (\ent{S} -  \ent{X_1}).
\]
Further applications of \Cref{second-useful} give
\begin{align*}
\dist{X^0_2}{U|S} - \dist{X^0_2}{X_2} &\leq \tfrac{1}{2} (\ent{S} -  \ent{X_2}) \\
\dist{X^0_1}{V|S} - \dist{X^0_1}{X_1} &\leq \tfrac{1}{2} (\ent{S} -  \ent{X_1})\\
\dist{X^0_2}{V|S} - \dist{X^0_2}{X_2} &\leq \tfrac{1}{2} (\ent{S} -  \ent{X_2})
\end{align*}
and
\[ \dist{X^0_1}{W|S} - \dist{X^0_1}{X_1} \leq \tfrac{1}{2} (\ent{S} + \ent{W} - \ent{X_1} - \ent{W'}),\] where $W' := X_2 + \tilde X_2$.
To treat $\dist{X^0_2}{W|S}$, first note that this equals $\dist{X^0_2}{W'|S}$, since for a fixed choice $s$ of $S$ we have $W' = W + s$. Now we may apply \Cref{second-useful} to obtain
\[ \dist{X^0_2}{W'|S} - \dist{X^0_2}{X_2} \leq \tfrac{1}{2} (\ent{S} + \ent{W'} - \ent{X_2} - \ent{W}).\]
Summing these six estimates and using~\eqref{HS-bound}, we conclude that
\begin{align}
  \nonumber
  \sum_{i=1}^2 \sum_{A\in\{U,V,W\}} \big(\dist{X^0_i}{A|S} & - \dist{X^0_i}{X_i}\big) \\
    &\leq 3\ent{S} - \tfrac{3}{2} \ent{X_1} - \tfrac{3}{2}\ent{X_2} \nonumber \\
    &\leq (6 - 3\eta) k + 3(2 \eta k - I_1).
  \label{total-dist}
\end{align}
Now we come to the key observation we will exploit, which is that
\begin{equation}
\label{uvw-sum} U + V + W = 0.
\end{equation}
Here, of course, we are using the fact that we are in characteristic $2$.
This is the only critical use of this fact in the argument, in the sense that it cannot be avoided by judicious insertion of negative signs or by accepting slightly worse constants by invoking the inequality $\dist{X}{-Y} \leq 3 \dist{X}{Y}$ (see~\cite[Equation 17]{tao-entropy}) and related estimates.

To see the force of~\eqref{uvw-s} and~\eqref{uvw-sum}, we state the following general claim.
\begin{lemma}%
  \label{lem:abstract}
  Let $G=\F_2^n$ and let $(T_1,T_2,T_3)$ be a $G^3$-valued random variable such that $T_1+T_2+T_3=0$ holds identically. Set
  \begin{equation}\label{delta-t1t2t3-def}
    \delta \coloneqq \sum_{1 \leq i < j \leq 3} \I{T_i;T_j}.
  \end{equation}
  Then there exist random variables $T'_1, T'_2$ such that
  \begin{align*}  \dist{T'_1}{T'_2} + & \eta (\dist{X_1^0}{T'_1} - \dist{X_1^0}{X_1}) + \eta(\dist{X_2^0}{T'_2} - \dist{X_2^0}{X_2}) \\ & \leq  \delta + \frac{\eta}{3} \biggl( \delta + \sum_{i=1}^2 \sum_{j = 1}^3 (\dist{X^0_i}{T_j} - \dist{X^0_i}{X_i}) \biggr).
  \end{align*}
\end{lemma}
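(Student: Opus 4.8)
The plan is to produce $T'_1,T'_2$ by conditioning pairs of the $T_i$ against their sums, using the fibring lemma (\Cref{cor-fibre}) to bound the resulting Ruzsa distance in terms of the mutual informations $\I{T_i;T_j}$, and then averaging over the three cyclic choices of which pair to take. Concretely, since $T_3 = T_1+T_2$ (characteristic $2$), I would first express the $T_i$ as sums of the independent variables at hand: in our application $T_1 = U$, $T_2 = V$, $T_3 = W$ with $U+V+W=0$, so I can realize the triple as $(Y_1+Y_3, Y_2+Y_4, \dots)$ for a suitable relabelling of $X_1,X_2,\tilde X_1,\tilde X_2$. Applying \Cref{cor-fibre} to that relabelling yields an identity of the shape
\[
\dist{T_a}{T_b} + \bigdist{(\text{a fibre})}{(\text{a fibre})} + \I{T_a : T_b \mid T_a + T_b} = (\text{sum of two }\dist{X_i}{X_j}\text{ terms}),
\]
and since $T_a+T_b = T_c$ this shows $\bigdist{(\text{fibre})}{(\text{fibre})}$ is small whenever $\I{T_a:T_b\mid T_c}$ is small. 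The variables $T'_1,T'_2$ will be exactly these fibred variables (conditioned on a common value of $S$, matching the form~\eqref{from-bsg}); running this for all three cyclic pairs $(a,b)\in\{(1,2),(2,3),(3,1)\}$ gives three candidate pairs, and I take the best — equivalently, average the three bounds, which introduces the factor $\tfrac13$ in front of $\delta$ on the right.

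Next I would handle the penalty terms $\eta(\dist{X_1^0}{T'_1} - \dist{X_1^0}{X_1})$ and $\eta(\dist{X_2^0}{T'_2} - \dist{X_2^0}{X_2})$. Here I would use \Cref{cond-dist-fact} (or \Cref{second-useful}, in the form already used for the six estimates leading to~\eqref{total-dist}) to bound $\dist{X_i^0}{T'_a}$, where $T'_a$ is a fibre of $T_a$ conditioned on $T_c$: the cost of conditioning is controlled by a mutual information of the type $\tfrac12\I{T_a : T_c}$, i.e. again by the $\I{T_i;T_j}$ terms, plus an unconditioned distance $\dist{X_i^0}{T_a}$. Since each of $T_1,T_2,T_3$ appears in exactly two of the three cyclic pairs, averaging over the three choices replaces $\dist{X_i^0}{T'_a}$ by the average $\tfrac13\sum_{j=1}^3 \dist{X_i^0}{T_j}$ (up to a $\delta$-sized error), which is why the term $\sum_{i=1}^2\sum_{j=1}^3 (\dist{X_i^0}{T_j} - \dist{X_i^0}{X_i})$ appears on the right-hand side, multiplied by $\tfrac{\eta}{3}$.

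The bookkeeping step is then to collect: the main Ruzsa-distance term averages to at most $\delta$ plus a $\tfrac{\eta}{3}\delta$-type contribution from conditioning costs absorbed into the $\eta$ penalties, and the penalty terms average to at most $\tfrac{\eta}{3}(\delta + \sum_{i,j}(\dist{X_i^0}{T_j} - \dist{X_i^0}{X_i}))$, matching the claimed inequality exactly. I expect the main obstacle to be the combinatorial/algebraic precision of the averaging: one must choose the three relabellings of $(X_1,X_2,\tilde X_1,\tilde X_2)$ feeding into \Cref{cor-fibre} so that the three instances are genuinely symmetric under cyclic permutation of $(T_1,T_2,T_3)$, verify that the ``fibre'' distances coming out of \Cref{cor-fibre} really are of the advertised conditioned form, and track carefully that each $\dist{X_i}{X_j}$ term produced by \Cref{cor-fibre} is either $\dist{X_i}{X_i}$ (bounded via~\eqref{x12}) or contributes to $k$, so that no stray terms remain. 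The constants $\eta$ and $\tfrac13$ have to line up on the nose, so this is where the argument is delicate rather than merely routine; conceptually, though, everything reduces to one application of the fibring lemma per cyclic pair plus \Cref{cond-dist-fact} for the penalties.
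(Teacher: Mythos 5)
Your outer structure is right — condition pairs of the $T_i$ on the third variable, bound the conditioned distance by mutual-information terms, control the $\eta$-penalties with \Cref{cond-dist-fact}, and average over permutations (the paper averages over all six permutations, you propose the three cyclic ones, but a short count shows both give the same $\tfrac{\eta}{3}$ factors). The problem is the core technical step.

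You propose to obtain the bound $\dist{T'_1}{T'_2} \lesssim \delta$ from the fibring lemma (\Cref{cor-fibre}), after writing the $T_i$ as sums of independent $X$-type variables. This cannot work. First, \Cref{lem:abstract} is stated for an \emph{arbitrary} jointly distributed triple $(T_1,T_2,T_3)$ with $T_1+T_2+T_3=0$; such a triple has no canonical decomposition into four independent inputs $Y_1,\dots,Y_4$, which is precisely what \Cref{cor-fibre} requires. Second, even in the intended application the $T_i$ are the \emph{conditioned} variables $(U \mid S=s)$, $(V\mid S=s)$, $(W\mid S=s)$, not $U,V,W$ themselves; after conditioning on $S=s$ they are no longer expressible as sums of the independent $X_1,X_2,\tilde X_1,\tilde X_2$. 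Third, and most importantly, the quantity $\delta=\sum_{i<j}\I{T_i;T_j}$ measures exactly the \emph{failure} of the $T_i$ to be independent, so the estimate
\[
\sum_{t_3}p_{T_3}(t_3)\,\dist{(T_1\mid T_3=t_3)}{(T_2\mid T_3=t_3)} \;\le\; 3\I{T_1:T_2}+2\ent{T_3}-\ent{T_1}-\ent{T_2}\;=\;\delta
\]
is a statement about a \emph{dependent} pair $(T_1,T_2)$ (note $2\ent{T_3}-\ent{T_1}-\ent{T_2}$ is not $2\dist{T_1}{T_2}$). This is the content of the entropic Balog--Szemer\'edi--Gowers lemma, \Cref{lem-bsg}, whose proof uses conditionally independent trials and submodularity; it does not follow from \Cref{projections-1} or \Cref{cor-fibre}, which are identities for independent inputs. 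Indeed \Cref{cor-fibre} has already been ``used up'' in \Cref{first,second} to derive the bounds on $I_1,I_2,I_3$ that feed into $\delta$; the role of \Cref{lem:abstract} is to consume $\delta$ via BSG, not to re-derive it. So the missing idea in your proposal is precisely \Cref{lem-bsg}.
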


We recommend that the reader first work through this proof in the case $\delta = 0$, when the variables $T_1, T_2, T_3$ are independent.  Recalling~\eqref{eq:i-eq-0}, the proof below collapses to taking $T'_1,T'_2$ to be some two of $\{T_1,T_2,T_3\}$, without the need to apply \Cref{lem-bsg}.

\begin{proof}
We apply the variant of the entropic Balog--Szemer\'edi--Gowers theorem stated in \Cref{lem-bsg}, taking $(A,B) = (T_1, T_2)$ there.
Since $T_1 + T_2 = T_3$, the conclusion is that
\begin{align} \nonumber \sum_{t_3}p_{T_3}(t_3) & \dist{(T_1 | T_3 = t_3)}{(T_2 | T_3 = t_3)} \\ & \leq 3 \I{T_1 : T_2} + 2 \ent{T_3} - \ent{T_1} - \ent{T_2}.\label{bsg-t1t2}\end{align}
The right-hand side in~\eqref{bsg-t1t2} can be rearranged as
\begin{align*} & 2( \ent{T_1} + \ent{T_2} + \ent{T_3}) - 3 \ent{T_1,T_2} \\ & = 2(\ent{T_1} + \ent{T_2} + \ent{T_3}) - \ent{T_1,T_2} - \ent{T_2,T_3} - \ent{T_1, T_3} = \delta,\end{align*}
using the fact that all three terms $\ent{T_i,T_j}$ are equal to $\ent{T_1,T_2,T_3}$ and hence to each other.
We also have
\begin{align*}
&  \sum_{t_3}p_{T_3}(t_3) \bigl(\dist{X^0_1}{(T_1 | T_3=t_3)} - \dist{X^0_1}{X_1}\bigr) \\
&\quad = \dist{X^0_1}{T_1 | T_3} - \dist{X^0_1}{X_1} \leq \dist{X^0_1}{T_1} - \dist{X^0_1}{X_1} + \tfrac{1}{2} \I{T_1 : T_3}
\end{align*}
by \Cref{cond-dist-fact}, and similarly
\begin{align*}
&  \sum_{t_3}p_{T_3}(t_3) (\dist{X^0_2}{(T_2 | T_3=t_3)} - \dist{X^0_2}{X_2}) \\
&\quad\quad\quad\quad\quad\quad \leq \dist{X^0_2}{T_2} - \dist{X^0_2}{X_2} + \tfrac{1}{2} \I{T_2 : T_3}.
\end{align*}
Temporarily define
\[ \psi[Y_1; Y_2] \coloneqq \dist{Y_1}{Y_2} +  \eta (\dist{X_1^0}{Y_1} - \dist{X_1^0}{X_1}) + \eta(\dist{X_2^0}{Y_2} - \dist{X_2^0}{X_2}).\]

Putting the above observations together, we have
\begin{align*}
 \sum_{t_3}p_{T_3}(t_3) \psi[(T_1 | T_3=t_3); (T_2 | T_3=t_3)] \leq \delta + \eta (\dist{X^0_1}{T_1}-\dist{X^0_1}{X_1}) \\
   + \eta (\dist{X^0_2}{T_2}-\dist{X^0_2}{X_2}) + \tfrac12 \eta \I{T_1:T_3} + \tfrac12 \eta \I{T_2:T_3}.
 \end{align*}
 Choosing some $t_3$ in the support of $T_3$ that minimizes the $\psi[-;-]$ value, and setting $T'_{1,3} \coloneqq (T_1 | T_3 = t_3)$, $T'_{2,3} \coloneqq (T_2 | T_3 = t_3)$, we have
\begin{align}\nonumber
 \psi[T_{1,3}';T_{2,3}'] \leq \delta + \eta (& \dist{X^0_1}{T_1}-\dist{X^0_1}{X_1})
    + \eta (\dist{X^0_2}{T_2}-\dist{X^0_2}{X_2}) \\ & + \tfrac12 \eta \I{T_1:T_3} + \tfrac12 \eta \I{T_2:T_3}.
   \label{eq:t1323}
\end{align}
We now repeat this analysis for all permutations of $\{T_1,T_2,T_3\}$ to get variables $T'_{\alpha,\gamma}, T'_{\beta,\gamma}$ for $\{\alpha,\beta,\gamma\}$ ranging over all six permutations of $\{1,2,3\}$.
Averaging the resulting inequalities~\eqref{eq:t1323}, and recalling the definition~\eqref{delta-t1t2t3-def} of $\delta$, we get
\[ \tfrac16 \sum_{\alpha,\beta,\gamma} \psi[T_{\alpha,\gamma}';T_{\beta,\gamma}']  \leq  \delta + \frac{\eta}{3} \biggl( \delta + \sum_{i=1}^2 \sum_{j = 1}^3 (\dist{X^0_i}{T_j}-\dist{X^0_i}{X_i}) \biggr),
\]
from which the result follows (taking $T'_1,T'_2$ to be $T'_{\alpha,\gamma},T'_{\beta,\gamma}$ for some $(\alpha,\beta,\gamma)$ that gives at most the average value).
\end{proof}

Applying \Cref{lem:abstract} with any random variables $(T_1,T_2,T_3)$ such that $T_1+T_2+T_3=0$ holds identically, and applying~\eqref{to-contradict} with $X'_1 = T'_1$, $X'_2 = T'_2$, we deduce that
\[
  k \leq \delta + \frac{\eta}{3} \biggl( \delta + \sum_{i=1}^2 \sum_{j = 1}^3 (\dist{X^0_i}{T_j} -\dist{X^0_i}{X_i}) \biggr).
\]
Note that $\delta$ is still defined by~\eqref{delta-t1t2t3-def} and thus depends on $T_1,T_2,T_3$.
In particular we may apply this for
\[
  T_1 = (U | S = s),\qquad T_2 = (V | S = s), \qquad T_3 = (W | S = s)
\]
for $s$ in the range of $S$ (which is a valid choice by~\eqref{uvw-sum}) and then average over $s$ with weights $p_S(s)$, to obtain
\begin{equation}\label{almost-done}
  k \leq \tilde \delta + \frac{\eta}{3} \biggl( \tilde \delta + \sum_{i=1}^2 \sum_{A\in\{U,V,W\}} \bigl(  \dist{X^0_i}{A|S} - \dist{X^0_i}{X_i}\bigr) \biggr),
\end{equation}
where
\[
  \tilde \delta \coloneqq  \I{U : V | S} + \I{V : W | S} + \I{W : U | S}.
\]
Putting this together with~\eqref{uvw-s} and~\eqref{total-dist}, we conclude that
\begin{align*}
  k &\leq \Bigl(1+\frac{\eta}{3}\Bigr)\Bigl(6\eta k-\frac{1-5\eta}{1-\eta}(2\eta k-I_1)\Bigr)+\frac{\eta}{3}\Bigl((6-3\eta)k+3(2\eta k-I_1)\Bigr)\\
  &= (8\eta + \eta^2) k - \biggl( \frac{1 - 5 \eta}{1-\eta}\Bigl(1 + \frac{\eta}{3}\Bigr) -  \eta \biggr)(2 \eta k - I_1)\\
  &\leq (8 \eta + \eta^2) k
 \end{align*}
since the quantity $2 \eta k - I_1$ is non-negative (by~\eqref{first-est}), and its coefficient in the above expression is non-positive provided that $\eta(2\eta + 17) \le 3$, which is certainly the case for our choice $\eta = \frac{1}{9}$ (and in fact for any $\eta \leq \frac{1}{6}$).
Moreover, for\footnote{In fact we can take any $\eta<\frac{1}{4 + \sqrt{17}} = \frac{1}{8.1231\dots}$, and the other constants in the paper can be improved accordingly. In \cite{liao1,liao2} the arguments were modified to also apply in the regime $\eta < \frac{1}{8}$.} $\eta=\tfrac{1}{9}$ we have $8 \eta + \eta^2 < 1$. It follows that $k=0$, as desired.
The proof of \Cref{de-prop} (and hence of all our results) is complete.

\appendix

\section{Entropy and additive combinatorics}\label{entropy-app}

In this appendix we record some standard inequalities regarding Shannon entropy, as well as the standard `entropic Ruzsa calculus' concerning entropies of sums of random variables.

First we remark that if $X$ takes values in a set $S$ then, by Jensen's inequality,
\begin{equation}\label{convexity-bound} \ent{X} \leq \log |S|.\end{equation}
Also, denoting by $p_X$ the density function of $X$,
\[ \ent{X} = \sum_x p_X(x) \log \frac{1}{p_X(x)} \geq \min_{x : p_X(x)>0} \log \frac{1}{p_X(x)},\] and therefore
\begin{equation}\label{px-lower} \max_x p_X(x) \geq e^{-\ent{X}}.\end{equation}

Given a pair $(X,Y)$ of random variables, the \emph{conditional entropy} $\ent{X|Y}$ is defined by the formula
\[
  \ent{X|Y} \coloneqq \sum_y p_Y(y) \ent{X|Y=y}
\]
where $y$ ranges over the support of $p_Y$, and $X|Y=y$ denotes the random variable $X$ conditioned on the event $Y=y$.  We have the fundamental \emph{chain rule}
\begin{equation}
  \label{chain-rule}
  \ent{X,Y} = \ent{X|Y} + \ent{Y}.
\end{equation}
Here we abbreviate $\ent{(X,Y)}$ as $\ent{X,Y}$, and will make similar abbreviations regarding other information-theoretic quantities in this paper without further comment; for instance, $\ent{(X,Y)|(Z,W)}$ becomes $\ent{X,Y|Z,W}$.  Note that~\eqref{chain-rule} implies a conditional generalization
\[
  \ent{X,Y|Z} = \ent{X|Y,Z} + \ent{Y|Z}.
\]
for all random variables $X,Y,Z$.

The \emph{mutual information} $\I{X:Y}$ is defined by the formula
\[
  \begin{split}
    \I{X:Y} &= \ent{X} + \ent{Y} - \ent{X,Y} \\
            &= \ent{X} - \ent{X|Y} \\
            &= \ent{Y} - \ent{Y|X},
  \end{split}
\]
and is non-negative by a standard application of Jensen's inequality, vanishing precisely when $X,Y$ are independent; in particular
\begin{equation}
\label{indep}
  \ent{X,Y} = \ent{X} + \ent{Y}
\end{equation}
if and only if $X,Y$ are independent, and
\begin{equation}\label{cond-dec} \ent{X | Y} \leq \ent{X}\end{equation} always.

Suppose now that $(X,Y,Z)$ is a triple of random variables. Applying~\eqref{cond-dec} to $(X | Z = z)$ and summing over $z$ (weighted by $p_Z(z)$) gives
\begin{equation}\label{submodularity-first} \ent{X | Y, Z} \leq \ent{X | Z},\end{equation} which is known as \emph{submodularity}. It may equivalently be written as
\begin{equation}\label{submod-basic}  \ent{X,Y,Z} + \ent{Z} \leq \ent{X,Z} + \ent{Y,Z}.\end{equation}

The \emph{conditional mutual information} $\I{X:Y|Z}$ is defined by
\[
  \I{X:Y|Z} \coloneqq \sum_z p_Z(z) \I{ (X|Z=z) : (Y|Z=z)}.
\]
Submodularity is equivalent to the statement that
\begin{equation}
  \label{nonneg-cond}
  \I{X:Y|Z} \geq 0,
\end{equation}
since
\begin{equation}\label{cond-form-mutual-2}  \I{X:Y|Z} = \ent{X,Z} + \ent{Y,Z} - \ent{X,Y,Z} - \ent{Z}.\end{equation}
Equality occurs in~\eqref{nonneg-cond} (and hence in~\eqref{submod-basic}) if and only if $X,Y$ are conditionally independent relative to $Z$.

\subsubsection*{$G$-valued random variables}
We turn now to additional properties enjoyed by random variables taking values in an additive group $G$.

Whenever $X,Y$ are $G$-valued random variables, we have
that
\[
  \ent{X\pm Y} \geq \ent{X\pm Y|Y} = \ent{X|Y} = \ent{X} - \I{X:Y}
\]
and similarly with the roles of $X,Y$ reversed, thus
\begin{equation}
  \label{sumset-lower-gen}
  \max(\ent{X}, \ent{Y}) - \I{X:Y} \leq \ent{X\pm Y}.
\end{equation}
We note also the conditional variant of~\eqref{sumset-lower-gen}, namely
\[
  \max(\ent{X|Z}, \ent{Y|Z}) - \I{X:Y|Z} \leq \ent{X\pm Y|Z},
\]
which follows from~\eqref{sumset-lower-gen} by conditioning on $Z = z$ and summing over $z$ (weighted by $p_Z(z)$).

A particular consequence of~\eqref{sumset-lower-gen} is that
\begin{equation}
  \label{sumset-lower}
  \max(\ent{X}, \ent{Y}) \leq \ent{X\pm Y}
\end{equation}
when $X,Y$ are independent.

Continuing to suppose that $X, Y$ are independent, recall the definition~\eqref{ruz-dist-def} of the Ruzsa distance $\dist{X}{Y}$ which, since $X$ and $Y$ are independent, is that
\[
  \dist{X}{Y} = \ent{X - Y} - \tfrac{1}{2} \ent{X} - \tfrac{1}{2} \ent{Y}.
\]
Comparing this with~\eqref{sumset-lower} we see that
\begin{equation}
  \label{ruzsa-diff}
  |\ent{X}-\ent{Y}| \leq 2\dist{X}{Y}.
\end{equation}
We may also deduce that
\[
  \ent{X-Y} - \ent{X}, \ent{X-Y} - \ent{Y} \leq 2\dist{X}{Y}.
\]

The most important property of the Ruzsa distance is the (Ruzsa) triangle inequality
\[ \dist{X}{Y} \leq \dist{X}{Z} + \dist{Z}{Y}.\]
This was shown in~\cite{ruzsa-entropy} and~\cite[(16)]{tao-entropy}; we recall a proof for completeness.
This is equivalent to establishing
\begin{equation}\label{submod-explicit} \ent{X - Y} \leq \ent{X-Z} + \ent{Z-Y} - \ent{Z}\end{equation}
whenever $X, Y, Z$ are independent. To prove this, apply~\eqref{nonneg-cond} with a change of variables to get $\I{X-Z : Y | X - Y} \geq 0$ which, when written out in full, gives
\[ \ent{X - Z, X - Y} + \ent{Y, X - Y} \geq \ent{X - Z, Y, X - Y} + \ent{X - Y}.\]
Using
\[ \ent{X - Z, X - Y} = \ent{X-Z, Y-Z}  \leq \ent{X - Z} + \ent{Y - Z},\]
\[ \ent{Y, X - Y} = \ent{X, Y}, \] and
\[ \ent{X - Z, Y, X - Y} = \ent{X, Y, Z} = \ent{X, Y} + \ent{Z},\] and rearranging, we indeed obtain~\eqref{submod-explicit}. (As observed in~\cite{gmt}, we do not in fact use the independence of $X$ and $Y$ here.)

We will also need conditional variants of the distance.
If $(X, Z)$ and $(Y, W)$ are random variables (where $X$ and $Y$ are $G$-valued) we define
\begin{equation} \dist{X  | Z}{Y | W}   \coloneqq \sum_{z,w} p_{Z}(z) p_{W}(w) \dist{(X| Z=z)}{(Y| W = w )}.\label{cond-dist-def}
\end{equation}
Alternatively, if $(X',Z'), (Y',W')$ are independent copies of the variables $(X,Z)$, $(Y,W)$,
\begin{equation}\label{cond-dist-alt} \dist{X  | Z}{Y | W} = \ent{X'-Y'|Z',W'} - \tfrac{1}{2} \ent{X'|Z'} - \tfrac{1}{2}\ent{Y'|W'} .\end{equation}

To conclude this appendix we give the proofs of two results from the literature which were used in the main text.  The first is an inequality of Madiman~\cite[Theorem I]{madiman}, which is a more general form of an inequality of Kaimanovich and Vershik~\cite[Proposition 1.3]{kv}.

\begin{lemma}
Suppose that $X, Y, Z$ are independent random variables taking values in an abelian group. Then
\[
  \ent{X + Y + Z} - \ent{X + Y} \leq \ent{Y+Z} - \ent{Y}.
\]
\end{lemma}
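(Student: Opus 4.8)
The plan is to prove the inequality by introducing an independent copy of $Z$ and applying submodularity of entropy to a cleverly chosen triple of random variables, exploiting the fact that all the variables are independent so that joint entropies of sums can be re-expressed freely.

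First I would rewrite the desired inequality as
\[ \ent{X+Y+Z} + \ent{Y} \leq \ent{X+Y} + \ent{Y+Z}. \]
This has exactly the shape of the submodularity inequality~\eqref{submod-basic}, namely $\ent{A,B,C} + \ent{C} \leq \ent{A,C} + \ent{B,C}$, so the natural idea is to find random variables $A, B, C$ with $\ent{A,C} = \ent{X+Y}$, $\ent{B,C} = \ent{Y+Z}$, $\ent{C} = \ent{Y}$, and $\ent{A,B,C} = \ent{X+Y+Z}$. Taking $C = Y$ handles the third equation. For the first, I want a variable $A$ such that $(A, Y)$ determines and is determined by $(X+Y, Y)$, i.e.\ such that $(A,Y)$ is related to $(X, Y)$ by an invertible map; the choice $A = X$ works, since from $(X,Y)$ we recover $X+Y$ and conversely. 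Symmetrically, $B = Z$ gives $\ent{B,C} = \ent{Z,Y} = \ent{Y+Z}$ using independence. Finally $\ent{A,B,C} = \ent{X,Y,Z} = \ent{X,Y+Z}$; but I need this to equal $\ent{X+Y+Z}$, which it does not in general — $(X,Y,Z)$ carries strictly more information than the single sum.

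So the naive substitution is slightly off, and the actual argument needs to condition. The fix is to apply submodularity in conditional form, or equivalently to invoke $\I{A:B\mid C}\geq 0$ from~\eqref{nonneg-cond}, with a change of variables chosen so that the three relevant joint entropies collapse correctly under independence. Concretely I would apply $\I{X : Z \mid X+Y+Z} \geq 0$. Writing this out via~\eqref{cond-form-mutual-2} gives
\[ \ent{X, X+Y+Z} + \ent{Z, X+Y+Z} \geq \ent{X, Z, X+Y+Z} + \ent{X+Y+Z}. \]
Now I simplify each joint entropy using independence and invertible changes of coordinates: $\ent{X, X+Y+Z} = \ent{X, Y+Z} = \ent{X} + \ent{Y+Z}$ (the pair determines and is determined by $(X, Y+Z)$, and $X$ is independent of $Y+Z$); similarly $\ent{Z, X+Y+Z} = \ent{Z, X+Y} = \ent{Z} + \ent{X+Y}$; and $\ent{X,Z,X+Y+Z} = \ent{X,Y,Z} = \ent{X}+\ent{Y}+\ent{Z}$. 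Substituting and cancelling $\ent{X}+\ent{Z}$ from both sides yields exactly $\ent{Y+Z} + \ent{X+Y} \geq \ent{Y} + \ent{X+Y+Z}$, as required.

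The main obstacle, such as it is, is bookkeeping: one must choose the right instance of conditional mutual information (there are several superficially plausible ones, and only some lead to the needed cancellation), and then carefully justify each rewriting of a joint entropy — both the invertible-change-of-variables steps, which hold for any random variables, and the factorizations into sums of individual entropies, which use independence of $X, Y, Z$ in an essential way. There is no deep difficulty beyond tracking which independences are being used where; the proof is a short, purely information-theoretic manipulation once the correct mutual-information inequality is identified.
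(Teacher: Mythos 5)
Your proof is correct and essentially identical to the paper's: both apply $\I{X : Z \mid X+Y+Z} \geq 0$, expand via~\eqref{cond-form-mutual-2}, and simplify the three joint entropies by invertible changes of coordinates plus independence. The preliminary exploration of the naive submodularity substitution is extra motivational scaffolding not present in the paper, but the final argument coincides.
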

\begin{proof}
By~\eqref{cond-form-mutual-2} we have
\begin{align*}
\I{ X : Z | X+Y+Z} &= \ent{X, X+Y+Z} + \ent{Z, X+Y+Z} \\
&\quad - \ent{X, Z, X+Y+Z} - \ent{X+Y+Z}.
\end{align*}
However, using~\eqref{indep} three times we have $\ent{X, X+Y+Z} = \ent{X, Y+Z} = \ent{X} + \ent{Y+Z}$, $\ent{Z, X+Y + Z} = \ent{Z, X+Y} = \ent{Z} + \ent{X+Y}$ and $\ent{X, Z, X+Y+Z} = \ent{X, Y, Z} = \ent{X} + \ent{Y} + \ent{Z}$.

After a short calculation, we see that the claimed inequality is equivalent to the assertion that $\I{ X : Z | X+Y+Z} \geq 0$, which of course is an instance of~\eqref{nonneg-cond}.
\end{proof}

The next lemma is not quite in the literature but is very closely related to the entropic version of the Balog--Szemer\'edi--Gowers lemma due to the fourth author~\cite[Lemma 3.3]{tao-entropy}. Here we provide slightly better constants and a slightly simpler proof.
\begin{lemma}\label{lem-bsg}
  Let $(A,B)$ be a $G^2$-valued random variable, and set $Z \coloneqq A+B$.
Then
\begin{equation}\label{2-bsg-takeaway} \sum_{z} p_Z(z) \dist{(A | Z = z)}{(B | Z = z)} \leq 3\I{A:B} + 2 \ent{Z} - \ent{A} - \ent{B}. \end{equation}
\end{lemma}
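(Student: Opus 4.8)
The plan is to follow the standard entropic Balog--Szemer\'edi--Gowers template: introduce several independent copies of the pair $(A,B)$, and build from them a random variable whose entropy simultaneously controls the left-hand side of~\eqref{2-bsg-takeaway} and is bounded above in terms of the quantities on the right. Concretely, let $(A_1,B_1)$, $(A_2,B_2)$ be two independent copies of $(A,B)$, and write $Z_1 = A_1+B_1$, $Z_2 = A_2+B_2$. The key observation is that conditioning on $Z_1 = Z_2$ (equivalently, on $A_1+B_1 = A_2+B_2$, i.e. $A_1 - A_2 = B_2 - B_1$) ties together the two copies of $A$ with the two copies of $B$; the variable $W \coloneqq A_1 + B_2$ (which equals $A_2 + B_1$ on that event, since $A_1-A_2 = B_2-B_1$ forces $A_1+B_2 = A_2+B_1$ in characteristic $2$) will play the role of the ``pivot'': $A_1 = W + B_2$ and $B_1 = W + A_2$, so $W$ together with either of $A_2, B_2$ recovers $A_1, B_1$.

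The main steps, in order, are as follows. First I would express the left-hand side $\sum_z p_Z(z)\dist{(A|Z=z)}{(B|Z=z)}$ in terms of conditional entropies using~\eqref{cond-dist-alt}: it equals $\ent{A_1 - B_2 \mid Z_1, Z_2, Z_1 = Z_2}$ minus half of $\ent{A_1 \mid Z_1}$ and $\ent{B_2 \mid Z_2}$, after setting up the two independent copies correctly (the distance $\dist{(A|Z=z)}{(B|Z=z)}$ is computed with \emph{independent} samples of $A$ and $B$ from the conditional law, which is exactly what taking $A$ from copy $1$ and $B$ from copy $2$ and conditioning on $Z_1 = Z_2 = z$ achieves). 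Second, I would bound $\ent{A_1 - B_2 \mid Z_1 = Z_2}$ from above: writing everything in terms of $W$, $A_2$, $B_2$ and the conditioning event, and using subadditivity and the chain rule~\eqref{chain-rule} together with the identities relating $(A_1,B_1,A_2,B_2)$ to $(W,A_2,B_2)$ on the event $Z_1=Z_2$. Third, I would bound the probability-weighting / entropy-of-conditioning loss: the cost of conditioning on the event $Z_1 = Z_2$ is controlled by $\ent{Z_1} = \ent{Z}$ via the elementary inequality that conditioning on an event $E$ decreases entropy by at most $\log(1/\P(E))$ on average — or, more cleanly in this entropic framework, by writing $\ent{\,\cdot\,\mid Z_1 = Z_2}$ using the chain rule relative to the indicator (or relative to $Z_1$, $Z_2$ themselves) and estimating $\I{A_1, B_1 : Z_2 \mid Z_1}$ and similar terms. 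Finally, I would collect the estimates, use $\I{A:B} = \ent{A} + \ent{B} - \ent{A,B}$ and $\ent{A,B} = \ent{A,Z} = \ent{A} + \ent{Z} - \I{A:Z}$-type rearrangements to massage the bound into the form $3\I{A:B} + 2\ent{Z} - \ent{A} - \ent{B}$, checking that the constants $3$ and $2$ come out (this is where the ``slightly better constants'' claimed in the text are won or lost, so the bookkeeping must be done carefully rather than wastefully).

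The main obstacle I anticipate is \emph{the conditioning step}: getting a clean, loss-free (up to the stated constants) bound on the entropy cost of restricting to the diagonal event $\{Z_1 = Z_2\}$. Naively, $\P(Z_1 = Z_2) = \sum_z p_Z(z)^2 \geq e^{-\ent{Z}}$ by~\eqref{px-lower}, which would give a loss of $\ent{Z}$ — but to land exactly at $2\ent{Z}$ in the final bound (and not worse) one must be careful to account for the $\ent{Z}$ appearing with the right multiplicity and sign, and to avoid double-counting it against the $-\tfrac12\ent{A|Z_1} - \tfrac12\ent{B|Z_2}$ terms and the $\I{A:B}$ terms. The cleanest route is probably to avoid the event-conditioning heuristic entirely and instead run the whole computation with $Z_1, Z_2$ as conditioning \emph{random variables} throughout, introducing the constraint $Z_1 = Z_2$ only at the level of an information inequality (e.g. $\I{W : Z_1 - Z_2 \mid \ldots} \geq 0$ applied judiciously, mirroring how~\eqref{submod-explicit} and the proof of the Ruzsa triangle inequality were handled above). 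Once that is set up, the remaining steps are routine entropic Ruzsa calculus of the kind used repeatedly in Sections~\ref{first} and~\ref{second}.
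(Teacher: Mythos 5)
Your plan is in the right spirit and identifies the right object to study (the conditional entropy of $A_1-B_2$, using two copies of $(A,B)$ tied together through $Z$), and you correctly anticipate that the conditioning step is where the difficulty lies. But the specific mechanism you propose for the conditioning does not close the gap, and the fix you gesture at (``run the whole computation with $Z_1,Z_2$ as conditioning random variables'') is underspecified at exactly the point that matters.

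The concrete problem is this. If $(A_1,B_1)$ and $(A_2,B_2)$ are fully independent copies of $(A,B)$ and you condition on the event $\{Z_1=Z_2\}$, then the common value of $Z$ is distributed proportionally to $p_Z(z)^2$, not $p_Z(z)$. The left-hand side of~\eqref{2-bsg-takeaway} is a $p_Z(z)$-weighted average; what your event conditioning produces is a $p_Z(z)^2$-weighted average, which is a genuinely different quantity (the two can differ by much more than an additive $O(\ent{Z})$). Moreover, the heuristic ``$\ent{X\mid E}\le \ent{X}-\log(1/\P(E))$ on average'' that you lean on in your first pass is not a valid entropy inequality; the correct general bound is $\ent{X\mid E}\le \ent{X}/\P(E)$ via $\ent{X}\ge \ent{X\mid \mathbf{1}_E}\ge \P(E)\ent{X\mid E}$, which is far too weak here. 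So the naive version of your argument would bound the wrong quantity, and would do so with a lossy constant.

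The device that makes the paper's proof work is precisely the one your proposal stops short of naming: \emph{conditionally independent trials} of $(A,B)$ relative to $Z$. That is, instead of taking independent copies and then conditioning on the rare event $\{Z_1=Z_2\}$, one directly constructs a joint distribution in which $Z:=A_1+B_1=A_2+B_2$ holds identically, $Z\sim p_Z$, and $(A_1,B_1)$, $(A_2,B_2)$ are independent given $Z$ with the correct conditional laws. With this coupling, $\ent{A_1,B_1,A_2,B_2}=2\ent{A,B}-\ent{Z}$, the left-hand side of~\eqref{2-bsg-takeaway} is exactly $\ent{A_1-B_2\mid Z}-\tfrac12\ent{A_1\mid Z}-\tfrac12\ent{B_2\mid Z}$ with the right $p_Z$-weighting built in, and a single application of submodularity to $(A_1-B_2, A_1, B_1)$ delivers the stated bound after bookkeeping. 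Your remark that $A_1+B_2=A_2+B_1$ ``in characteristic~$2$'' is also a small slip: the lemma is stated for a general abelian group $G$, and the identity $A_1-B_2=A_2-B_1$ is a rearrangement of $A_1+B_1=A_2+B_2$ needing no characteristic-$2$ input. Once you replace ``condition on the event $Z_1=Z_2$'' with the conditionally-independent-trials coupling, the rest of your outline matches the paper's proof.
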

We stress that the quantity $2 \ent{Z} - \ent{A} - \ent{B}$ is \emph{not} the same as $2\dist{A}{B}$, because $(A,B)$ are given a joint distribution which may not be independent. In particular, $\ent{Z}=\ent{A+B}$ may not match the entropy of a sum of independent copies of $A$ and $B$.
\begin{proof}
In the proof we will need the notion of \emph{conditionally independent trials} of a pair of random variables $(X,Y)$ (not necessarily independent). We say that $X_1, X_2$ are conditionally independent trials of $X$ relative to $Y$ by declaring $(X_1 | Y = y)$ and $(X_2 | Y = y)$ to be independent copies of $(X | Y = y)$ for all $y$ in the range of $Y$.
We then have
\[ \ent{(X_1 | Y = y), (X_2 | Y = y)} = 2\ent{X | Y = y}\] for all $y$, which upon summing over $y$ (weighted by $p_Y(y)$) gives \[ \ent{X_1, X_2 | Y} = 2 \ent{X | Y}\] and hence
\begin{align}\nonumber
\ent{X_1, X_2, Y}  = \ent{X_1, X_2 | Y } + \ent{Y} & = 2 \ent{X | Y} + \ent{Y} \\ & = 2 \ent{X,Y} - \ent{Y}.\label{cond-trial-h}
\end{align}
Note also that the marginal distributions of $(X_1,Y)$ and $(X_2,Y)$ each match the original distribution $(X,Y)$.

Turning to the proof of \Cref{lem-bsg} itself, let $(A_1, B_1)$ and $(A_2, B_2)$ be conditionally independent trials of $(A,B)$ relative to $Z$, thus $(A_1,B_1)$ and $(A_2,B_2)$ are coupled through the random variable $A_1 + B_1 = A_2 + B_2$, which by abuse of notation we shall also call $Z$.

Observe that the left-hand side of~\eqref{2-bsg-takeaway} is
\begin{equation}\label{lhs-to-bound}
\ent{A_1 - B_2| Z} - \tfrac{1}{2}\ent{A_1 | Z} - \tfrac{1}{2} \ent{B_2 | Z}.
\end{equation}
since, crucially, $(A_1 | Z=z)$ and $(B_2 | Z=z)$ are independent for all $z$.

Applying submodularity~\eqref{submod-basic} gives
\begin{equation}\label{bsg-31} \begin{split}
&\ent{A_1 - B_2} + \ent{A_1 - B_2, A_1, B_1} \\
&\qquad \leq \ent{A_1 - B_2, A_1} + \ent{A_1 - B_2,B_1}.
\end{split}\end{equation}
We estimate the second, third and fourth terms appearing here.
First note that, by~\eqref{cond-trial-h} (noting that the tuple $(A_1 - B_2, A_1, B_1)$  determines the tuple $(A_1, A_2, B_1, B_2)$ since $A_1+B_1=A_2+B_2$)
\begin{equation}\label{bsg-24} \ent{A_1 - B_2, A_1, B_1} = \ent{A_1, B_1, A_2, B_2} = 2\ent{A,B} - \ent{Z}.\end{equation}
Next observe that
\begin{equation}\label{bsg-23} \ent{A_1 - B_2, A_1} = \ent{A_1, B_2} \leq \ent{A} + \ent{B}.
\end{equation}
Finally, we have
\begin{equation}\label{bsg-25} \ent{A_1 - B_2, B_1} = \ent{A_2 - B_1, B_1} = \ent{A_2, B_1} \leq \ent{A} - \ent{B}.\end{equation}
Substituting~\eqref{bsg-24},~\eqref{bsg-23} and~\eqref{bsg-25} into~\eqref{bsg-31} yields
\[ \ent{A_1 - B_2} \leq 2\I{A:B} + \ent{Z}\] and so by~\eqref{cond-dec}
\[\ent{A_1 - B_2 | Z}  \leq 2\I{A:B} + \ent{Z}.\]
Since
\begin{align*} \ent{A_1 | Z} & = \ent{A_1, A_1 + B_1} - \ent{Z} \\ & = \ent{A,B} - \ent{Z} \\ & = \ent{A} + \ent{B} - \I{A:B} - \ent{Z}\end{align*}
and similarly for $\ent{B_2 | Z}$, we see that~\eqref{lhs-to-bound} is bounded by
$3\I{A:B} + 2\ent{Z}-\ent{A}-\ent{B}$ as claimed.
\end{proof}

\section{From entropic PFR to combinatorial PFR}\label{equiv-app}
In this appendix we repeat the arguments from~\cite{gmt} showing that \Cref{pfr-entropy} with some constant $C'$ in place of $11$ implies \Cref{pfr} with $C=C'+1$.
In particular, with $C'=11$ in \Cref{pfr-entropy}, this gives the claimed constant $C=12$ for \Cref{pfr}.

Let $A,K$ be as in \Cref{pfr}.  Let $U_A$ be the uniform distribution on $A$, thus $\ent{U_A} = \log |A|$. By~\eqref{convexity-bound} and the fact that $U_A + U_A$ is supported on $A + A$, $\ent{U_A + U_A} \leq \log|A+A|$. The doubling condition $|A+A| \leq K|A|$ therefore gives
\[ \dist{U_A}{U_A} \leq \log K.\]
By \Cref{pfr-entropy}, we may thus find a subspace $H$ of $\F_2^n$ such that
\begin{equation}\label{uauh} \dist{U_A}{U_H} \leq \tfrac{1}{2} C' \log K.\end{equation}
By~\eqref{ruzsa-diff} we conclude that
\begin{equation}\label{ah}
  |\log |H| - \log |A|| \leq C' \log K.
\end{equation}
From definition of Ruzsa distance,~\eqref{uauh} is equivalent to
\[ \ent{U_A - U_H} \leq \log( |A|^{1/2} |H|^{1/2}) + \tfrac{1}{2} C' \log K.\]
By~\eqref{px-lower} we conclude the existence of a point $x_0 \in \F_p^n$ such that
\[ p_{U_A-U_H}(x_0) \geq |A|^{-1/2} |H|^{-1/2} K^{-C'/2},\]
or equivalently
\[ |A \cap (H + x_0)| \geq K^{-C'/2} |A|^{1/2} |H|^{1/2}.\]
Applying the Ruzsa covering lemma~\cite[Lemma 2.14]{tao-vu}, we may thus cover $A$ by at most
\[ \frac{|A + (A \cap (H+x_0))|}{|A \cap (H + x_0)|} \leq \frac{K|A|}{K^{-C'/2} |A|^{1/2} |H|^{1/2}} = K^{C'/2+1} \frac{|A|^{1/2}}{|H|^{1/2}}\]
translates of
\[ \bigl(A \cap (H + x_0)\bigr) - \bigl(A \cap (H + x_0)\bigr) \subseteq H.\]
If $|H| \leq |A|$ then we are already done thanks to~\eqref{ah}.  If $|H| > |A|$ then we can cover $H$ by at most $2 |H|/|A|$ translates of a subspace $H'$ of $H$ with $|H'| \leq |A|$.  We can thus cover $A$ by at most
\[ 2K^{C'/2+1} \frac{|H|^{1/2}}{|A|^{1/2}}\]
translates of $H'$, and the claim again follows from~\eqref{ah}.

\end{document}